\newtheorem{theorem}{Theorem}[section]
\newtheorem{lemma}[theorem]{Lemma}
\newtheorem{proposition}[theorem]{Proposition}
\newtheorem{corollary}[theorem]{Corollary}
\theoremstyle{definition}
\newtheorem{definition}[theorem]{Definition}
\newtheorem{remark}[theorem]{Remark}
\newtheorem{example}[theorem]{Example}
\numberwithin{equation}{section}
\newcommand*{\dist}{\operatorname{dist}}
\newcommand*{\lebm}{\mathcal{L}^n}
\newcommand*{\lebmone}{\mathcal{L}^1}
\newcommand*{\lebmtwo}{\mathcal{L}^2}
\newcommand*{\lebmthree}{\mathcal{L}^3}
\newcommand*\R{\mathbb{R}}
\newcommand*\N{\mathbb{N}}
\newcommand*\rn{\mathbb{R}^n}
\newcommand*\omegainfty{\Omega_{\infty}}
\newcommand*\re{\mathbb{R}}
\newcommand*\sph{\mathbb{S}}
\title[fractional Poincar\'e inequalities on unbounded domains]{Study of fractional Poincar\'e inequalities on unbounded domains}
\begin{document}


\maketitle

\centerline{\scshape Indranil Chowdhury$^1$ , Gyula Csat\'{o}$^2$ ,   Prosenjit Roy$^3$ and Firoj Sk$^3$}
\medskip 
{\footnotesize
  \centerline{1. Norwegian University of Science and Technology, Norway, indranil.chowdhury@ntnu.no }
  
 \centerline{2. Universitat de Barcelona, Spain, member of BGSMath Barcelona, part of the Catalan research}
 \centerline{group 2017 SGR 1392, supported
 by the MINECO grants MTM2017-83499-P and}
 \centerline{ ~MTM2017-84214-C2-1-P, and by the Mar\'ia de Maeztu Grant MDM-2014-0445, gyula.csato@ub.edu}
 \centerline{3. Indian Institute of Technology,  Kanpur, India, prosenjit@iitk.ac.in and firoj@iitk.ac.in}
  
}



\author{}
\address{}
\curraddr{}
\email{}
\thanks{}

\smallskip

\keywords{\textit{Keywords:} \ Fractional Poincar\'e inequality, fractional-Sobolev spaces, unbounded domains, infinite strips like domains, (regional) fractional Laplacian.} 
\smallskip

\subjclass{\textit{Subject Classification}\ {26D10; 35R09; 46E35.}
\date{}


\maketitle

\begin{abstract}  The central aim of this paper is to study (regional) fractional Poincar\'e type inequalities on unbounded domains satisfying the finite ball condition.  Both existence and non existence type results on regional fractional inequality are established depending on various conditions on domains and on the range of $s \in (0,1)$. The best  constant in  both regional fractional and fractional Poincar\'e inequality is characterized for strip  like domains $(\omega \times \mathbb{R}^{n-1})$,  and the results obtained in this direction are analogous to those of the  local case. This settles one of the natural questions raised by  K. Yeressian in [\textit{Asymptotic behavior of elliptic nonlocal equations set in cylinders,  Asymptot. Anal. 89, (2014), no 1-2}].
\end{abstract}

\section{Introduction} \label{section:introduction}
By Poincar\'e inequality on a bounded domain $D\subset \rn$ for $1\leq p<\infty$ we get a constant $C=C(p,D)$ such that   
$$
    \int_{ D}|u|^p\;dx\leq C\int_{D}|\nabla u|^p\;
    dx,\;\;\text{ for all }u\in W^{1,p}_0(D),
$$
where the space $W^{1,p}_0(D)$ is the closure of the space   of smooth functions with compact support ($C_c^\infty(D)$)in the norm $||u||_{1,p,D}:=\big(\int_{ D}|u|^p\;dx+\int_{ D}|\nabla u|^p\;dx\big)^{1/p}$. This inequality remains true if $D$ is bounded in one direction and also if $D$ has finite measure. Also, it is easy to see that finite ball condition (see Definition \ref{def:finite ball cond}) is  necessary  for Poincar\'e inequality to hold.
A natural question is what are  the analogues of Poincar\'e inequalities in fractional Sobolev spaces, in particular for unbounded domains. 

Let $ \Omega$ be any open set in $\rn$, $0<s<1$  and let us define the Gagliardo semi-norm of $u$ as
$$
   [u]_{s,2,\Omega}:=\Big(\frac{C_{n,s}}{2}
   \int_{\Omega}\int_{\Omega}\frac{|u(x)-u(y)|^2}{|x-y|^{n+2s}}dxdy\Big)^\frac{1}{2}.
$$
Taking into account the constant $C_{n,s}$ will be crucial to study the best constants in fractional Poinca\'re inequality (c.f. Theorem \ref{Poincare 1}).  We refer to Section \ref{sec: known results} for details on the constant $C_{n,s}$.    Further, the fractional Sobolev Space $W^{s,2}(\Omega),$  is defined as
$$	
  W^{s,2}(\Omega):=\Bigg\{u\in L^2(\Omega):\int_{\Omega}
  \int_{\Omega}\frac{|u(x)-u(y)|  ^2}{|x-y|^{n+2s}}dxdy<\infty\Bigg\};
$$ 
endowed with the norm 
$$
    ||u||_{s,2,\Omega}:=\left(\|u\|_{L^2(\Omega)}^2+[u]_{s,2,\Omega}^2
    \right)^\frac{1}{2}.
$$
\noindent The spaces $W^{s,2}_0(\Omega)$ and $H^{s}_{\Omega}(\rn)$ denote the closure of $C_c^\infty(\Omega)$ with the norms $||\cdot||_{s,2,\Omega}$ and $\left(\|u\|_{L^2(\Omega)}^2+[u]_{s,2,\rn}^2\right)^\frac{1}{2}$ respectively. 
For more details about the fractional Sobolev space we refer to \cite{adams}, \cite{brasco}, \cite{valdi}, \cite{maz} and \cite{guide}. 
 These spaces play an important role in studying the Dirichlet problems involving fractional (and regional fractional) Laplace operators, see \cite{Chen18}, \cite{kass}, \cite{valdi} and \cite{raffela} for related works close to this direction.

\smallskip

To discuss the results regarding fractional Poincar\'e inequality, let us define 
$$
    P^1_{n,s}(\Omega):=\inf_{\substack{u\in W_0^{s,2}(\Omega) 
    \\ u\neq 0}}
    \frac{[u]_{s,2,\Omega}^2}{\displaystyle\int_{\Omega}u^2} 
    \;\text{ and }\; 
     P^2_{n,s}(\Omega):=\inf_{\substack{u\in H^{s}_{\Omega}(\rn) 
    \\ u\neq 0}}
    \frac{[u]_{s,2,\R^n}^2}{\displaystyle\int_{\Omega}u^2}.
$$
 If $P^1_{n,s}(\Omega)>0$, then we say \textit{regional fractional Poincar\'e} inequality  holds true. Whereas if $P^2_{n,s}(\Omega)>0$ we say \textit{fractional Poincar\'e} inequality  holds true.  
 $P^1_{n,s}$ and $P^2_{n,s}$ differ significantly in many properties, see for instance Proposition \ref{prop:elementary properties}, where we have summarized the known results on bounded domains. One of the relevant differences concerns the domain monotonicity: $P^2_{n,s}(\Omega_1)\geq P^2_{n,s}(\Omega_2)$ if $\Omega_1\subset\Omega_2$. However no domain monotonicity is known for $P^1_{n,s},$ but they have in common that the finite ball condition is required for both $P_{n,s}^1(\Omega),P_{n,s}^2(\Omega)>0$ to hold. Another known difference is the behaviour with respect to Schwarz symmetrization $u\mapsto u^{\ast}$ and $\Omega\mapsto\Omega^{\ast}$: it is well know that $P^2_{n,s}(\Omega^{\ast})\leq P^2_{n,s}(\Omega),$ see for example Frank and Seiringer \cite{Frank Seiringer}. But it is not true for $P^1_{n,s},$ see \cite{Li Wang}.

To the best of our knowledge, the study of regional fractional Poincar\'e and fractional Poincar\'e inequalities for general domains, in particular unbounded ones, is  largely open. Some literature is only available on fractional Poincar\'e inequalities in bounded domains and with zero mean value condition, see for instance the papers  \cite{dyda ihna Vaha},  \cite{vaha22}, and references therein.

It is easy to see that---as in the local case---the finite ball condition (see Definition \ref{def:finite ball cond}) is necessary for fractional Poincar\'e inequality to hold, regional or not. In particular the inequality cannot hold in $\re^n$ (see however \cite{mouhot} for fractional Poincar\'e inequality on $\re^n$ with zero mean value condition and with positive integrable weights).
One of the simplest unbounded domain satisfying the finite ball condition is the strip $\Omega_\infty:=\re^{n-1}\times (-1,1)$. For the local case, it is well known that the best Poincar\'e constant of $\Omega_\infty$ is same as the best Poincar\'e constant of the cross section $(-1,1).$ The proof is elementary, as we illustrate in $2$ dimension: if $\mu_1(\Omega)=\inf\{\int_{\Omega}|\nabla u|^2:\,u\in C_c^{\infty}(\Omega),\, \|u\|_{L^2(\Omega)}=1\}$  then for any $v\in C_c^{\infty}(\Omega_{\infty})$ one has the estimate, using the Poincar\'e inequality in $(-1,1),$
\begin{equation}
   \label{eq:local case lower bound}
     \|v\|_{L^2(\omegainfty)}^2\leq \frac{1}{\mu_1((-1,1))}\int_{\R}\int_{-1}^1v_{x_2}^2
      \leq\frac{1}{\mu_1((-1,1))}\int_{\omegainfty}(v_{x_1}^2+v_{x_2}^2).
\end{equation}
This shows that $\mu_1(\omegainfty)\geq \mu_1((-1,1)).$ The inverse inequality $\mu_1(\omegainfty)\leq \mu_1((-1,1))$ is easily shown by taking a sequence of function $w_\ell(x)=\varphi(x_2)v_\ell(x_1),$ where $\{v_\ell\}_{\ell\in\mathbb{N}}\subset C_c^{\infty}(\R)$ is an approximation of the function identically equal to $1$ in $\R$ and $\varphi$  satisfies  $\int_{-1}^1|\varphi'|^2 = \mu_1((-1,1)) \int_{-1}^1 \varphi^2$. For precise proof we refer to \cite{arn}.  One of the goal in this article is to  obtain similar characterization for both  $P^1_{n,s}(\Omega_{\infty})$ and $P^2_{n,s}(\Omega_{\infty})$.   In \cite{karen}, the author proved   $P^2_{n,s}(\Omega_{\infty}) >0$  and asked  whether  $P^2_{n,s}(\Omega_{\infty}) = P^2_{1,s}((-1,1))$[see, \cite{karen} section 4.2]. Theorem \ref{best constant} answers this question, even  for more general domain. Whereas, our next theorem deals with  similar issue  for regional fractional Poincar\'e inequality. Note that, the proofs are much more involved in our case, as there is no analogy to the estimate $v_{x_1}^2\leq v_{x_1}^2+v_{x_2}^2$ of \eqref{eq:local case lower bound}.

\begin{theorem}\label{Poincare 1}
For $\omegainfty=\R^{n-1}\times(-1,1)\subset\R^n$ the following statements hold:
\begin{enumerate}
    \item $P^1_{n,s}(\omegainfty)=P^1_{1,s}((-1,1))=0$, if $0<s\leq\frac{1}{2}$.
    \item $P^1_{n,s}(\omegainfty)>0$, if $\frac{1}{2}<s<1$. More precisely: the best constant $P^1_{n,s}(\omegainfty)$ is equal to the best constant of the cross section of the strip $\omegainfty$, i.e. 
$$
    P^1_{n,s}(\omegainfty)=P^1_{1,s}((-1,1)).
$$
\end{enumerate}
\end{theorem}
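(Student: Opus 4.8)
The plan is to treat the two regimes separately, since the mechanisms are completely different. For part (1), where $0<s\le \frac12$, I would show $P^1_{n,s}(\omegainfty)=0$ by exhibiting an explicit minimizing sequence. The natural candidates are products $w_\ell(x',x_n)=\varphi(x_n)\,v_\ell(x')$, with $\varphi\in C_c^\infty((-1,1))$ a fixed nonzero bump and $v_\ell\in C_c^\infty(\R^{n-1})$ a sequence of plateau functions equal to $1$ on larger and larger balls (say $v_\ell$ supported in $B_{2\ell}$, equal to $1$ on $B_\ell$, with $|\nabla v_\ell|\le C/\ell$). One splits the Gagliardo energy $[w_\ell]_{s,2,\omegainfty}^2$ by writing $w_\ell(x)-w_\ell(y)=\varphi(x_n)(v_\ell(x')-v_\ell(y'))+v_\ell(y')(\varphi(x_n)-\varphi(y_n))$ and using $|a+b|^2\le 2|a|^2+2|b|^2$. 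The term involving $\varphi(x_n)-\varphi(y_n)$ is controlled by $\|v_\ell\|_{L^2(\R^{n-1})}^2$ times a one-dimensional Gagliardo-type integral in the $x_n$ variable (which converges because $\varphi$ is smooth and compactly supported), hence is of the same order as $\|w_\ell\|_{L^2}^2=\|\varphi\|_{L^2}^2\|v_\ell\|_{L^2(\R^{n-1})}^2$. The crucial term is the one with $v_\ell(x')-v_\ell(y')$; here one integrates out $x_n,y_n$ against $|x-y|^{-n-2s}$ and is left essentially with $\int\int \frac{|v_\ell(x')-v_\ell(y')|^2}{|x'-y'|^{(n-1)+(2s-1)}}\,dx'dy'$, i.e.\ a fractional energy of order $s-\tfrac12<0$ — morally an $H^{s-1/2}$ seminorm of $v_\ell$. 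One then checks that this quantity is $o(\|v_\ell\|_{L^2(\R^{n-1})}^2)$ as $\ell\to\infty$: for $s<\frac12$ the kernel is integrable enough that the plateau structure makes the ratio vanish, and for $s=\frac12$ one gets a logarithmic gain that still forces the ratio to $0$ with the right choice of $v_\ell$ (e.g.\ slowly growing plateaus). This gives $P^1_{n,s}(\omegainfty)\le 0$, hence $=0$; the equality $P^1_{1,s}((-1,1))=0$ for $s\le\frac12$ is the one-dimensional special case (or is quoted from Proposition \ref{prop:elementary properties}).

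For part (2) the upper bound $P^1_{n,s}(\omegainfty)\le P^1_{1,s}((-1,1))$ is the easy direction and is again obtained by the product test-function construction: take $\varphi$ a near-optimizer for $P^1_{1,s}((-1,1))$ (mollified and truncated so it lies in $W^{s,2}_0((-1,1))$) and $v_\ell$ a plateau sequence in $\R^{n-1}$. Plugging $w_\ell=\varphi\otimes v_\ell$ into the quotient, the same splitting as above shows the cross term and the pure-$v_\ell$ term are now $o(\|w_\ell\|_{L^2}^2)$ (this time using $2s-1>0$, so the $v_\ell$-energy is a genuine positive-order fractional seminorm of a plateau, which is $o(\|v_\ell\|_{L^2}^2)$), while the leading term $\|v_\ell\|_{L^2(\R^{n-1})}^2\,\big(\tfrac{C_{n,s}}2\int_{-1}^1\int_{-1}^1\frac{|\varphi(t)-\varphi(\tau)|^2}{|t-\tau|^{1+2s}}\,dt\,d\tau\big)$ — after integrating the $x'$-variables against $\int_{\R^{n-1}}\frac{dz}{(|z|^2+r^2)^{(n+2s)/2}}=c\,r^{-1-2s}$, with the constant $c$ arranged precisely so that $C_{n,s}\cdot c = C_{1,s}$ — reproduces exactly $\|v_\ell\|_{L^2}^2\,[\varphi]_{s,2,(-1,1)}^2$. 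Dividing and letting $\ell\to\infty$ and then $\varphi$ optimize yields the upper bound. The constant bookkeeping (checking $C_{n,s}\int_{\R^{n-1}}(|z|^2+1)^{-(n+2s)/2}dz=C_{1,s}$) is the one place where the normalization of $C_{n,s}$ from Section \ref{sec: known results} is used essentially.

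The hard part is the lower bound $P^1_{n,s}(\omegainfty)\ge P^1_{1,s}((-1,1))$ for $\frac12<s<1$, precisely because, as the authors note, there is no pointwise domination $\partial_{x_1}$-energy $\le$ full gradient to lean on as in \eqref{eq:local case lower bound}. The approach I would take is to fix $u\in C_c^\infty(\omegainfty)$ and, for each fixed pair $x',y'\in\R^{n-1}$, estimate the ``fiber'' contribution by a one-dimensional regional Poincar\'e inequality on $(-1,1)$, then integrate in $x',y'$. Concretely, one wants an inequality of the form
$$
  \int_{-1}^1 u(z',t)^2\,dt \;\le\; \frac{1}{P^1_{1,s}((-1,1))}\cdot\frac{C_{n,s}}{2}\int_{\R^{n-1}}\int_{\R^{n-1}}\int_{-1}^1\int_{-1}^1 \frac{|u(x',t)-u(y',\tau)|^2}{|x-y|^{n+2s}}\,\Big(\text{suitable weight}\Big),
$$
obtained by a Fubini rearrangement: integrate the full double integral first over the directions $x'-y'$ in $\R^{n-1}$ for fixed midpoint, reducing the $n$-dimensional kernel to an effective one-dimensional kernel on the cross-section that is comparable to $|t-\tau|^{-1-2s}$ (this is where $2s-1>0$ is needed for the relevant $\R^{n-1}$-integral $\int \frac{dz}{(|z|^2+(t-\tau)^2)^{(n+2s)/2}}$ to converge and produce the $|t-\tau|^{-1-2s}$ behavior with the correct constant), and then apply $P^1_{1,s}((-1,1))$ slice-by-slice. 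The technical obstacle is that the difference quotient mixes different fibers $x'\neq y'$, so one cannot literally apply the 1D inequality fiberwise; the fix is to first use a convexity/Jensen argument (or a triangle-inequality splitting $u(x',t)-u(y',\tau)=[u(x',t)-u(x',\tau)]+[u(x',\tau)-u(y',\tau)]$, then show the second piece integrates to something lower order using $s>\frac12$, much as in the plateau estimates of part (1)) to reduce to the diagonal $x'=y'$ term, for which the fiberwise 1D Poincar\'e inequality applies directly. Carrying out this reduction while keeping the constant sharp — so that no loss is incurred and the bound is exactly $P^1_{1,s}((-1,1))$ — is the delicate heart of the proof, and I expect it to require a careful limiting argument (e.g.\ first proving $\ge P^1_{1,s}((-1,1))-\varepsilon$ on bounded truncations $\Omega_\ell=(-\ell,\ell)^{n-1}\times(-1,1)$ and passing $\ell\to\infty$) rather than a one-line estimate.
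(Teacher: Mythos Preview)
Your upper-bound arguments via product test functions $\varphi\otimes v_\ell$ are essentially the paper's approach (Proposition \ref{useful prop}), modulo two slips. First, in Part (1) you keep $\varphi$ \emph{fixed}; but the $\varphi(x_n)-\varphi(y_n)$ term contributes exactly $[\varphi]^2_{s,2,(-1,1)}\cdot\|v_\ell\|_{L^2}^2$ (by the very constant identity $C_{n,s}\int_{\R^{n-1}}(1+|z|^2)^{-(n+2s)/2}dz=C_{1,s}$ you invoke in Part (2)), so the Rayleigh quotient tends to $[\varphi]^2_{s,2,(-1,1)}/\|\varphi\|^2_{L^2}$, not to $0$. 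You must then optimize over $\varphi$ and quote $P^1_{1,s}((-1,1))=0$ from Proposition \ref{prop:elementary properties}; equivalently, just prove $P^1_{n,s}(\omegainfty)\le P^1_{1,s}((-1,1))$ once for all $s$. Second, integrating out $y_n$ gives the kernel $|x'-y'|^{-(n-1)-2s}$ (a genuine $H^s$ seminorm in $\R^{n-1}$), not $|x'-y'|^{-(n-1)-(2s-1)}$; the conclusion survives since $[v_\ell]^2_{s,2,\R^{n-1}}/\|v_\ell\|_{L^2}^2\to 0$ by scaling, but your ``order $s-\tfrac12$'' heuristic is incorrect.

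The genuine gap is the lower bound in Part (2). Your splitting $u(x',t)-u(y',\tau)=[u(x',t)-u(x',\tau)]+[u(x',\tau)-u(y',\tau)]$ together with $|a+b|^2\le 2|a|^2+2|b|^2$ loses a factor of $2$ immediately, and the off-diagonal piece $\iint\!\iint |u(x',\tau)-u(y',\tau)|^2/|x-y|^{n+2s}$ is \emph{not} lower order for a generic $u$: after integrating out $t$ it is $\int_{-1}^1[u(\cdot,\tau)]^2_{s,2,\R^{n-1}}\,d\tau$, which is comparable to the full seminorm. No truncation or limiting argument will recover the sharp constant from this route. The paper's method is fundamentally different: it uses the Loss--Sloane spherical decomposition (Lemma \ref{lemma:Loss Sloan}) to write $[u]^2_{s,2,\omegainfty}$ as an integral over directions $w\in\mathbb{S}^{n-1}$ of one-dimensional Gagliardo seminorms along lines parallel to $w$. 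Each such line meets $\omegainfty$ in an interval of length $2/|w_n|$, whose one-dimensional Poincar\'e constant is $|w_n|^{2s}P^1_{1,s}((-1,1))$ by scaling; applying this line by line (Lemma \ref{lemma:angle condition}) and verifying the Beta-function identity $\frac{C_{n,s}}{2C_{1,s}}\int_{\mathbb{S}^{n-1}}|w_n|^{2s}\,d\mathcal{H}^{n-1}=1$ yields the sharp lower bound with no loss. The key idea you are missing is to slice along \emph{all} directions rather than project onto the $x_n$-axis: this avoids ever separating mixed fibers, and the angular average is precisely what produces the exact constant.
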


Next we deal with the characterization for $P^2_{n,s}(\Omega_\infty)$. However, the main idea of the proof differs from Theorem \ref{Poincare 1}, and we are able to give a more general result, thanks to characterization of $P^2_{n,s}(\Omega)$ as the first Dirichlet eigenvalue of the fractional Laplace operator on $\Omega$.  In the class of simply connected domains  in two dimensions, it was shown in \cite{Sandeep-Man} that finite ball condition [see definition \ref{def:finite ball cond}] is a necessary and sufficient condition for local Poincar\'e inequality to hold true. In \cite{ip} it was established that such a result cannot hold for both $P_{2,s}^1$ and $P_{2,s}^2$  in the range $s\in (0,\frac{1}{2}).$

\begin{theorem}\label{best constant}
Consider the strip $\omegainfty=\re^m\times\omega$ in $\rn$ with $1\leq m<n$, where $\omega$ is a bounded open subset of $\re^{n-m}$. Then for $0<s<1$, we have 
$$
    P^2_{n,s}(\omegainfty)=P^2_{n-m,s}(\omega).
$$
\end{theorem}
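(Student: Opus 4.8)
The plan is to establish the two inequalities $P^2_{n,s}(\omegainfty)\leq P^2_{n-m,s}(\omega)$ and $P^2_{n,s}(\omegainfty)\geq P^2_{n-m,s}(\omega)$ separately, exploiting the variational characterization of $P^2_{n,s}$ as the bottom of the spectrum of the fractional Laplacian $(-\Delta)^s$ on the domain. For the easy direction $P^2_{n,s}(\omegainfty)\leq P^2_{n-m,s}(\omega)$, I would adapt the classical construction sketched in the introduction for the local case. Take a near-optimal $\varphi\in C_c^\infty(\omega)$ with $[\varphi]_{s,2,\re^{n-m}}^2$ close to $P^2_{n-m,s}(\omega)\|\varphi\|_{L^2(\omega)}^2$, and a sequence $v_\ell\in C_c^\infty(\re^m)$ that is a suitable ``plateau'' approximation of the constant $1$ (e.g. $v_\ell(y)=\psi(y/\ell)$ for a fixed cutoff $\psi$ equal to $1$ on $B_1$). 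Set $w_\ell(y,z)=v_\ell(y)\varphi(z)$ on $\re^m\times\omega$. The point is to show that the Gagliardo energy $[w_\ell]_{s,2,\rn}^2$ divided by $\|w_\ell\|_{L^2(\omegainfty)}^2$ converges to $[\varphi]_{s,2,\re^{n-m}}^2/\|\varphi\|_{L^2(\omega)}^2$ as $\ell\to\infty$; the cross terms coming from the $y$-variation of $v_\ell$ must be shown to be lower-order because the gradient of $v_\ell$ is spread over a region of volume $\sim\ell^m$ while its amplitude is $O(1/\ell)$. This requires splitting the double integral over $\rn\times\rn$ according to whether the increment is purely in $z$, purely in $y$, or mixed, and carefully estimating each; this is a routine but slightly delicate scaling computation.

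The substantive direction is $P^2_{n,s}(\omegainfty)\geq P^2_{n-m,s}(\omega)$. Here the natural tool is the Fourier transform in the unbounded variable $y\in\re^m$. For $u\in C_c^\infty(\omegainfty)$, write $\hat u(\xi,z)$ for the partial Fourier transform in $y$. Using the representation of the Gagliardo seminorm via the Fourier symbol $|\zeta|^{2s}$ of $(-\Delta)^s$ on $\rn$, one gets
\begin{equation*}
  [u]_{s,2,\rn}^2 = c \int_{\re^n}|\zeta|^{2s}\,|\widehat{u}(\zeta)|^2\,d\zeta,
\end{equation*}
and after transforming only in $y$ this becomes an integral over $\xi\in\re^m$ of a quadratic form in the $z$-profile $\hat u(\xi,\cdot)$ whose symbol is $(|\xi|^2+\cdot)^s$ rather than $|\cdot|^s$. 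The key inequality I would aim to prove is the pointwise-in-$\xi$ bound
\begin{equation*}
  \big\langle (|\xi|^2-\Delta_z)^s\, g, g\big\rangle_{L^2(\re^{n-m})} \;\geq\; \big\langle (-\Delta_z)^s g, g\big\rangle_{L^2(\re^{n-m})} \;\geq\; P^2_{n-m,s}(\omega)\,\|g\|_{L^2(\omega)}^2
\end{equation*}
for every $g$ that is the $y$-Fourier transform of a function in $H^s_{\omegainfty}(\rn)$ evaluated at frequency $\xi$, hence supported (in $z$) in $\overline\omega$. The first inequality follows from monotonicity of $t\mapsto (|\xi|^2+t)^s$ together with $|\xi|^2\geq 0$ (equivalently, $(|\xi|^2-\Delta_z)^s\geq(-\Delta_z)^s$ as operators, which can be seen from the spectral calculus or from the subordination/heat-semigroup formula $t^s = c_s\int_0^\infty(1-e^{-t\lambda})\lambda^{-1-s}\,d\lambda$). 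The second inequality is exactly the definition of $P^2_{n-m,s}(\omega)$ applied to $g$. Integrating over $\xi$ and using Plancherel in $y$ then yields $[u]_{s,2,\rn}^2\geq P^2_{n-m,s}(\omega)\int_{\omegainfty}u^2$, and passing to the closure $H^s_{\omegainfty}(\rn)$ gives the claim.

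The main obstacle I anticipate is making the Fourier-in-$y$ argument rigorous in the fractional setting: the Gagliardo seminorm over $\rn$ does not split cleanly under a partial Fourier transform the way the Dirichlet energy does in the local case, so one must pass through the Fourier multiplier description of $[\cdot]_{s,2,\rn}$ and justify the identity relating it to $\int_{\re^m}\langle(|\xi|^2-\Delta_z)^s\,\hat u(\xi,\cdot),\hat u(\xi,\cdot)\rangle\,d\xi$ — in particular checking that $\hat u(\xi,\cdot)$ lies in the correct space (supported in $\overline\omega$, in $H^s$) for a.e.\ $\xi$, and that the operator inequality $(|\xi|^2-\Delta_z)^s\geq(-\Delta_z)^s$ is applied to the right class of test functions. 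An alternative that avoids Fourier analysis, and which may be cleaner to write, is to argue directly via the extension/eigenfunction characterization: $P^2_{n,s}(\omegainfty)$ equals the first Dirichlet eigenvalue $\lambda_1^s(\omegainfty)$ of $(-\Delta)^s$, and one can test the Rayleigh quotient for $\omega$ and lift, while for the lower bound use that a ground state on $\omegainfty$, if it existed, could be sliced — but since $\omegainfty$ is unbounded a ground state need not exist, so one would work with approximate eigenfunctions or again reduce to the Fourier computation. I would present the Fourier-multiplier proof as the primary argument, isolating the operator inequality $(|\xi|^2-\Delta)^s\geq(-\Delta)^s$ as a short lemma proved via the subordination formula, since that is the conceptual heart of why the cross-section constant is exactly reproduced.
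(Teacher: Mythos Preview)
Your proposal is correct, and the upper bound $P^2_{n,s}(\omegainfty)\leq P^2_{n-m,s}(\omega)$ is handled exactly as in the paper (their Proposition~3.1 carries out the product-cutoff computation you sketch, with the dimensional-reduction identity $C_{n,s}\Theta_{m,n}=C_{n-m,s}$ doing the bookkeeping).

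For the lower bound, your route is genuinely different from the paper's. The paper does not use the Fourier transform at all; instead it lifts the positive first Dirichlet eigenfunction $W$ of $(-\Delta_{n-m})^s$ on $\omega$ to $u^\ast(x)=W(X_2)$, checks directly that $(-\Delta_n)^s u^\ast = P^2_{n-m,s}(\omega)\,u^\ast$ in $\omegainfty$, and then applies the discrete Picone inequality with $\phi=v^2/u^\ast$ to obtain $P^2_{n-m,s}(\omega)\int v^2 \leq [v]_{s,2,\rn}^2$ for all $v\in C_c^\infty(\omegainfty)$. Your Fourier argument, by contrast, reduces everything to the trivial pointwise symbol inequality $(|\xi|^2+|\eta|^2)^s\geq |\eta|^{2s}$, after which one integrates in $\eta$ to get $[\hat u(\xi,\cdot)]_{s,2,\re^{n-m}}^2$, applies the definition of $P^2_{n-m,s}(\omega)$ to the compactly supported slice $\hat u(\xi,\cdot)\in C_c^\infty(\omega)$, and integrates in $\xi$ using Plancherel. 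Your concerns about rigor are somewhat overstated: no operator-monotonicity or subordination formula is needed, since the inequality is pointwise on the symbol side and the partial Fourier transform of $u\in C_c^\infty(\re^m\times\omega)$ at any frequency $\xi$ is manifestly smooth with support in a fixed compact subset of $\omega$. The trade-off is that the paper's Picone argument is more robust (it would adapt to nonlinear settings and does not rely on the Fourier characterization), while your argument is shorter and avoids invoking existence and interior positivity of the eigenfunction $W$.
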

\noindent The proof of the above theorem follows by two main steps. First, as an  application of discrete Picone identity we  prove  that $ P^2_{n,s}(\omegainfty) \geq P^2_{n-m,s}(\omega)$. The other inequality is obtained by  constructing suitable test functions on truncated domains $\Omega_\ell = B_{m}(0,\ell) \times \omega$ and  then finally letting $\ell$ tend to infinity.  Very recently, and after our result have appeared, an independent proof of the above theorem was also given by \cite{Ambrosio Freddi Musina}.
Independently, various kinds of problems (mainly PDEs) on $\Omega_\ell$ have been considered, and their asymptotic behavior as $\ell\to\infty$ is studied. Such kind of theories are now well studied  in the local case and for more details on this subject we refer \cite{chipot1}, \cite{delpino}, \cite{mojsic},  \cite{crs}, \cite{arn1}, \cite{arn}, \cite{karen} and the references therein.

\smallskip
The rest of the paper discusses the existence and non-existence issues regarding  regional fractional Poincar\'e inequality.  The lack of any known domain monotonicity property for $P^1_{n,s}(\Omega)$ makes the study of regional fractional Poincar\'e inequality more interesting, even for specific domains, or any special class of domains. Our next theorem provides
 a sufficient conditions on the domain $\Omega$ for which regional fractional Poincar\'e inequality remains true.  At the end of Section \ref{section:Proof 1.1 and 1.2} we will give some examples of  domains which satisfy the hypothesis of Theorem  \ref{Poincare gen domain 1}. Here $\mathbb{S}^{n-1}\subset\mathbb{R}^n$ denotes the unit sphere and $\mathcal{H}^{n-1}$ denotes the $(n-1)$-dimensional Hausdorff measure.

\begin{theorem}\label{Poincare gen domain 1}
Let $\Omega\subset\R^n$ be a measurable set and $\frac{1}{2}<s<1.$ Suppose there exists $\Sigma\subset\mathbb{S}^{n-1}$ with $\mathcal{H}^{n-1}(\Sigma)>0$ and  such that for all $w\in \Sigma$ and all $x\in\R^n$ the one dimensional intersections with $\Omega$
$$
   A_{x,w}:= \left\{ t \in \mathbb{R}: x+tw \in  \Omega\right\} \quad\text{ satisfy uniformly one dimensional  finite ball condition},
$$
that is
$$
    \sup\left\{ \operatorname{length}(I):\, I\text{ interval, } 
    I\subset  A_{x,w} \right\}=m <\infty.
$$
Then regional fractional Poincar\'e inequality holds, more precisely
$$
   P_{n,s}^1(\Omega)\geq \frac{C_{n,s}}{2 C_{1,s}}\mathcal{H}^{n-1}
   (\Sigma)\frac{P_{1,s}^1((0,1))}{m^{2s}}.
$$
\end{theorem}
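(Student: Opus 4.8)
The plan is to reduce the $n$-dimensional Gagliardo seminorm to a superposition of one-dimensional seminorms along the directions $w \in \Sigma$, and then apply the one-dimensional regional Poincaré inequality on each line $\{x+tw : t \in \mathbb{R}\}$ where the slice $A_{x,w}$ satisfies the uniform one-dimensional finite ball condition. First I would record the one-dimensional building block: if $A \subset \mathbb{R}$ is an open set every interval of which has length at most $m$, then for all $v \in C_c^\infty(A)$ one has $\int_A \int_A \frac{|v(t)-v(\tau)|^2}{|t-\tau|^{1+2s}}\,dt\,d\tau \geq \frac{2 P_{1,s}^1((0,1))}{C_{1,s}\, m^{2s}} \int_A v^2$; this follows by covering $A$ by its connected components, noting each has length $\le m$, applying the scaling behavior of $P_{1,s}^1$ on an interval of length $\ell$ (namely $P_{1,s}^1((0,\ell)) = \ell^{-2s} P_{1,s}^1((0,1))$), and using that the double integral over $A\times A$ dominates the sum of the double integrals over (component)$\times$(same component). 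Note here $\frac{1}{2}<s<1$ is used so that $P_{1,s}^1$ of an interval is positive.

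Next I would set up the integral-geometric decomposition. For a fixed direction $w \in \mathbb{S}^{n-1}$, write points of $\mathbb{R}^n$ as $x = y + tw$ with $y$ ranging over the hyperplane $w^\perp$ and $t \in \mathbb{R}$; then for any $u \in C_c^\infty(\Omega)$,
$$
   \int_{\mathbb{R}^n}\int_{\mathbb{R}^n} \frac{|u(x)-u(z)|^2}{|x-z|^{n+2s}}\,dx\,dz \;\geq\; \int_{w^\perp}\!\int_{\mathbb{R}}\int_{\mathbb{R}} \frac{|u(y+tw)-u(y+\tau w)|^2}{|t-\tau|^{1+2s}}\,dt\,d\tau\,d\mathcal{H}^{n-1}(y),
$$
which is the standard lower bound for the full double integral by its "one-directional" part (restricting $z - x$ to be parallel to $w$ amounts to, after a change of variables, integrating the one-dimensional kernel over each line). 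Applying the one-dimensional estimate from the first step on each line — whose intersection with $\Omega$ is exactly $A_{y,w}$, which by hypothesis has all intervals of length $\le m$ — and then integrating back over $y \in w^\perp$ and reassembling by Fubini, I get
$$
   \int_{\mathbb{R}^n}\int_{\mathbb{R}^n} \frac{|u(x)-u(z)|^2}{|x-z|^{n+2s}}\,dx\,dz \;\geq\; \frac{2 P_{1,s}^1((0,1))}{C_{1,s}\, m^{2s}} \int_{\Omega} u^2.
$$
Finally, averaging this over $w \in \Sigma$ against $d\mathcal{H}^{n-1}(w)$ (the left side does not depend on $w$, so this just multiplies by $\mathcal{H}^{n-1}(\Sigma)$), multiplying both sides by $\tfrac{C_{n,s}}{2}$ to form $[u]_{s,2,\Omega}^2$, and taking the infimum over $u \in C_c^\infty(\Omega)$ (dense in $W_0^{s,2}(\Omega)$) yields exactly
$$
   P_{n,s}^1(\Omega) \;\geq\; \frac{C_{n,s}}{2C_{1,s}}\, \mathcal{H}^{n-1}(\Sigma)\, \frac{P_{1,s}^1((0,1))}{m^{2s}}.
$$

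The main obstacle I anticipate is making the "one-directional lower bound" step fully rigorous: justifying that the full double integral over $\Omega\times\Omega$ dominates the averaged-over-$\Sigma$ family of one-dimensional double integrals, with the correct constant. Concretely, one wants to write $x - z = rw$ and change variables; but naively restricting to $z$ on the line through $x$ in direction $w$ is a measure-zero event, so the honest argument integrates $w$ over a small cap and then over all of $\Sigma$, using a coarea/polar-coordinates identity of the form $\int\int F(x,z)\,dx\,dz = \int_{\mathbb{S}^{n-1}}\int_{w^\perp}\int\int \frac{|t-\tau|^{n-1}}{\cdots} F(y+tw, y+\tau w)\,\cdots$, and one must check the Jacobian factors combine with $|x-z|^{n+2s} = |t-\tau|^{n+2s}$ to leave precisely $|t-\tau|^{1+2s}$ in the denominator and no leftover $w$-dependence beyond the factor $\mathcal{H}^{n-1}(\Sigma)$. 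The bookkeeping of these constants — and confirming it reproduces the stated constant $\frac{C_{n,s}}{2C_{1,s}}\mathcal{H}^{n-1}(\Sigma)$ — is where the care is needed; the rest is routine given the one-dimensional result and the scaling of $P_{1,s}^1$ on intervals.
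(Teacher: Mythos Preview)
Your overall strategy --- reduce to one-dimensional seminorms along lines and apply the $1$-D regional Poincar\'e on each slice --- is exactly the paper's approach (it packages this as Lemma~\ref{lemma:Loss Sloan} of Loss--Sloane plus Lemma~\ref{lemma:angle condition}). But the argument as you have written it has a real gap, which you partly diagnose in your ``obstacle'' paragraph but do not actually repair.

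The displayed inequality you assert for a \emph{fixed} direction $w$,
\[
   \int_{\R^n}\!\int_{\R^n} \frac{|u(x)-u(z)|^2}{|x-z|^{n+2s}}\,dx\,dz
   \;\geq\;
   \int_{w^\perp}\!\int_{\R}\!\int_{\R}\frac{|u(y+tw)-u(y+\tau w)|^2}{|t-\tau|^{1+2s}}\,dt\,d\tau\,d\mathcal H^{n-1}(y),
\]
is false in general: take $u$ depending only on $x_1$ and $w=e_1$; the right side can exceed the left by an arbitrarily large factor (indeed Loss--Sloane says the left side equals one half the \emph{average} of the right side over all $w\in\mathbb S^{n-1}$, not a pointwise bound). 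Consequently your ``average over $\Sigma$'' step is also off: since both sides of your fixed-$w$ inequality are $w$-independent after you have already applied the $1$-D estimate, integrating over $\Sigma$ multiplies both sides by $\mathcal H^{n-1}(\Sigma)$ and the factor cancels --- it never appears in the final bound the way you claim.

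The fix is the one you gesture at: use the Loss--Sloane \emph{identity}
\[
   2\int_{\Omega}\!\int_{\Omega}\frac{|u(x)-u(y)|^2}{|x-y|^{n+2s}}
   =\int_{\mathbb S^{n-1}}\!d\mathcal H^{n-1}(w)\int_{w^\perp}\!\int\!\int
   \frac{|u(y+\ell w)-u(y+tw)|^2}{|\ell-t|^{1+2s}},
\]
then \emph{drop} the directions $w\notin\Sigma$ (this is where the inequality enters and where $\mathcal H^{n-1}(\Sigma)$ genuinely appears), and only then apply your one-dimensional estimate line by line. With that order of operations the constants fall out exactly as stated; this is precisely the content of the paper's Lemma~\ref{lemma:angle condition}.
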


\noindent An example of a domain which does not satisfy the hypothesis of Theorem \ref{Poincare gen domain 1} (but satisfies the finite ball condition) is an infinite union of concentric annuli, see Example \ref{exmpl:domains with sufficient condn} (v):
\begin{equation}
 \label{intro:concentric annuli}
  C=\bigcup_{k=1}^{\infty}B_{2k}(0)\setminus B_{2k-1}(0).
\end{equation}
\smallskip

On the other hand, we know $P^1_{n,s}(\Omega)=0$ if $\Omega$ is bounded and $s\in(0,\frac{1}{2})$ [see Proposition \ref{prop:elementary properties}]. It has been observed by Frank, Jin and  Xiong \cite{rupert} that the same proof works also for domains with finite measure that satisfies \eqref{eq:rupert measure condition}, [see Remark \ref{remark:bounded perimeter}]. 
For example, if we consider the domain 
\begin{equation}
  \label{example_not_rupert}
  \widetilde{\Omega}=\{(x_1,x_2)\in\R^2:\,x_1>1,\;0<x_2<1/x_1^{1+\epsilon}\}  \ \textrm{where} \ \epsilon > 0,
\end{equation}
then by \cite{rupert} it follows  $P^1_{n,s}(\widetilde{\Omega})=0$ whenever $0 < s \leq  \frac{\epsilon}{2(1+\epsilon)}$, but the same result is inconclusive for the range $\frac{\epsilon}{2(1+\epsilon)} < s \leq \frac{1}{2}$. However, in this context we can apply our next theorem  for the whole range of $s \in (0, \frac{1}{2})$ and conclude that $P^1_{n,s}(\widetilde{\Omega})=0$.  Noteworthy, using Theorem \ref{Poincare gen domain 1} to $\widetilde{\Omega}$, we see that $P^1_{n,s}(\widetilde{\Omega})>0$ for $s> \frac{1}{2}$. In our next theorem,  we prove  a very general condition on domains (not only of finite measure) for which regional fractional Poincar\'e inequality does not hold.



\begin{theorem}
\label{theorem:distance condition}
Let $0<s<\frac{1}{2}$ and suppose that $\Omega\subset\R^n$ is a Lipschitz set. Suppose there exists a bounded open Lipschitz set $U$ and a sequence $\lambda_{k}$ tending to infinity such that
$$
    \lim_{k\to\infty}\frac{1}{\mathcal{L}^n(\lambda_k U\cap \Omega)}
    \int_{\lambda_k U\cap \Omega}
    \frac{dx}{\dist(x,(\lambda_kU)^c)^{2s}}=0,
$$
where $\lambda_kU=\{\lambda_k x:\,x\in U\}.$ Then $P^1_{n,s}(\Omega)=0.$
\end{theorem}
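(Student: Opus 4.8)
The plan is to construct an explicit test-function sequence supported essentially on the pieces $\lambda_k U \cap \Omega$ and show the Rayleigh quotient $[u]_{s,2,\Omega}^2 / \int_\Omega u^2$ tends to $0$ along it. First I would recall the standard fact (the ``bounded-domain, $s<1/2$'' argument referenced in Proposition \ref{prop:elementary properties}) that, for a Lipschitz set, the \emph{constant function} $1$ restricted to a bounded piece has finite Gagliardo energy controlled by a boundary-distance integral: concretely, for a bounded Lipschitz set $V$ and $0<s<1/2$,
$$
   \int_V \int_V \frac{|1-1|^2}{|x-y|^{n+2s}}\,dx\,dy = 0,
$$
which is trivially zero, so the real content is that when we take $u_k = 1$ on $\lambda_k U \cap \Omega$ and $u_k = 0$ outside, the cross term between inside and outside is what must be estimated. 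Since $\Omega$ need not have finite measure, I cannot simply take $u_k \equiv 1$ on all of $\Omega$; instead I take $u_k = \chi_{\lambda_k U \cap \Omega}$ (or a Lipschitz mollification thereof staying between $0$ and $1$ and equal to $1$ on a slightly shrunk copy). The denominator is then $\int_\Omega u_k^2 \geq (1-o(1))\,\mathcal{L}^n(\lambda_k U \cap \Omega)$.

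The key estimate is for the numerator. Write $E_k = \lambda_k U \cap \Omega$. Then
$$
   [u_k]_{s,2,\Omega}^2 = \frac{C_{n,s}}{2}\left( \int_{E_k}\int_{E_k} 0 + 2\int_{E_k}\int_{\Omega \setminus E_k} \frac{1}{|x-y|^{n+2s}}\,dy\,dx \right)
   = C_{n,s} \int_{E_k}\int_{\Omega \setminus E_k}\frac{dy\,dx}{|x-y|^{n+2s}}.
$$
Now for fixed $x \in E_k = \lambda_k U \cap \Omega$, every $y \in \Omega \setminus E_k$ lies in $(\lambda_k U)^c$ (because $y \in \Omega$ but $y \notin \lambda_k U \cap \Omega$ forces $y \notin \lambda_k U$), hence $|x-y| \geq \dist(x,(\lambda_k U)^c)$. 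Using this lower bound together with $\Omega \setminus E_k \subset \mathbb{R}^n \setminus \{y : |y-x| < \dist(x,(\lambda_k U)^c)\}$, a polar-coordinates computation gives
$$
   \int_{\Omega\setminus E_k}\frac{dy}{|x-y|^{n+2s}} \leq \int_{\{|z|\geq \dist(x,(\lambda_k U)^c)\}}\frac{dz}{|z|^{n+2s}} = \frac{\omega_{n-1}}{2s}\,\dist(x,(\lambda_k U)^c)^{-2s}.
$$
Integrating in $x$ over $E_k$ yields
$$
   [u_k]_{s,2,\Omega}^2 \leq \frac{C_{n,s}\,\omega_{n-1}}{2s}\int_{\lambda_k U \cap \Omega}\frac{dx}{\dist(x,(\lambda_k U)^c)^{2s}},
$$
so the Rayleigh quotient is bounded by $\frac{C_{n,s}\omega_{n-1}}{2s(1-o(1))}$ times the hypothesized ratio, which tends to $0$. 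This shows $P^1_{n,s}(\Omega) = 0$.

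Two technical points need care, and I expect the mollification to be the main (though still routine) obstacle. First, $\chi_{E_k}$ itself lies in $W^{s,2}_0(\Omega)$ only after checking it can be approximated by $C_c^\infty(\Omega)$ functions; since $\Omega$ is Lipschitz and $s<1/2$, characteristic functions of Lipschitz subsets are in $W^{s,2}$, and one regularizes by convolution and cutoff — here the Lipschitz hypotheses on both $\Omega$ and $U$ are used to ensure $E_k$ is (locally) Lipschitz and the regularization does not blow up the energy by more than a constant factor, uniformly in $k$ after rescaling. Second, one must confirm the denominator does not degenerate: $\mathcal{L}^n(\lambda_k U \cap \Omega) > 0$ for large $k$, which holds because $U$ is open and $\lambda_k \to \infty$ (assuming $\Omega$ is nonempty, so some fixed ball meets $\Omega$ and eventually sits inside $\lambda_k U$); if $\Omega$ is empty the statement is vacuous. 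I would organize the write-up as: (i) reduce to producing test functions $u_k$ with quotient $\to 0$; (ii) the distance/polar-coordinate estimate above with $u_k = \chi_{E_k}$; (iii) the mollification lemma making $u_k$ admissible with energy changed by at most a uniform constant; (iv) conclude.
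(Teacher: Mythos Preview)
Your approach is essentially the same as the paper's: use (a smooth approximation of) the characteristic function of $E_k=\lambda_kU\cap\Omega$ as test function, observe that the inner integral over $E_k\times E_k$ is negligible, and bound the cross term $\int_{E_k}\int_{\Omega\setminus E_k}|x-y|^{-n-2s}$ by the distance integral via polar coordinates. The paper does the mollification first (invoking Proposition \ref{prop:elementary properties}(i) to get $v_k\in C_c^\infty(E_k)$ with small inner energy and $\|v_k\|_{L^2}^2\geq \tfrac12|E_k|$), whereas you compute with $\chi_{E_k}$ and defer the mollification; the estimates are identical.

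One small correction: your claim that the Lipschitz hypotheses on $\Omega$ and $U$ ensure $E_k$ is (locally) Lipschitz is not true in general---the intersection of two Lipschitz domains can fail to be Lipschitz where the boundaries meet tangentially. The paper notes this explicitly and works instead with the weaker fact that $\mathcal{H}^{n-1}(\partial E_k)<\infty$ (since $\partial E_k\subset (\partial\Omega\cap\lambda_k\overline U)\cup\partial(\lambda_kU)$), which by Remark \ref{remark:bounded perimeter} is enough to run the $s<\tfrac12$ cutoff argument. You should make the same adjustment in your step (iii).
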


\noindent  In section \ref{sec:sufficent_cndn_unbd_dom_0},  we discuss the application of this result in details by providing several scenarios through examples. 
For instance, the result applies to any domain which satisfies the growth condition $\mathcal{L}^n(\Omega\cap B_R)\geq c R^n$  [see, Corollary \ref{cor:domain condn}], the domain of type \eqref{intro:concentric annuli} is such an example. 

Whether $P^1_{n,s}(\Omega)=0$ for every unbounded domain if $s\in(0,\frac{1}{2})$ is still an open problem to the best of our knowledge. We end the  paper by a last result in this direction and give a sufficient condition for unbounded domains, but with finite measure, for  $P^1_{n,s}(\Omega)=0$ to hold. See Theorem \ref {theorem:general s smaller half in tube} in Section
\ref{Section:sufficient condition for domains with finite measure}. This theorem, vaguely speaking, deals with a wider class of domains which are of the type \eqref{example_not_rupert}.

\section{Some Elementary properties and known results} \label{sec: known results}
We briefly fix the notation that we will use throughout this paper. For an integer $n$ and a measurable set $\Omega\subset\R^n$ we write $\mathcal{L}^n$ to denote the Lebesgue measure, or shortly $|\Omega|$ if there is no ambiguity concerning $n.$  $B_R(x)$ denotes a ball of radius $R$ centered at $x.$ We omit the center if $x=0.$ We will use that
\begin{equation}
 \label{eq:surface of S n-1}
    \mathcal{H}^{n-1}\left(\mathbb{S}^{n-1}\right)=\frac{2\pi^{\frac{n}{2}}}
    {\Gamma\left(\frac{n}{2}\right)},\quad\text{where $\Gamma$ is standard Gamma 
    function}.
\end{equation}
We will use the Beta function, which is defined for $x,y>0$ by
$$
   B(x,y):=\int_0^1t^{x-1}(1-t)^{y-1}dt,
$$
and its properties:
\begin{equation}
 \label{eq:properties of Beta}
   B(x,y)=2\int_0^{\frac{\pi}{2}}(\sin\theta)^{2x-1}(\cos\theta)^{2y-1}d\theta,
   \quad
   B(x,y)=\frac{\Gamma(x)\Gamma(y)}{\Gamma(x+y)}.
\end{equation}

The constant $C_{n,s},$ as in introduction is uniquely defined if we want consistency with fractional partial integration and with the fractional Laplacian in the following sense
$$
   [u]_{s,2,\R^n}^2=\int_{\R^n}u(-\Delta_n)^su,
$$
and $(-\Delta_n)^su=\mathcal{F}^{-1}(|\xi|^{2s}(\mathcal{F}u)(\xi)),$ where $\mathcal{F}$ is the Fourier transform, see \cite{guide} for details. The fractional Laplace operator can be equivalently defined as
 $$
     (-\Delta_n)^su(x)=C_{n,s}\;P.V.\int_{\rn}\frac{u(x)-u(y)}{|x-y|^{n+2s}}dy.
 $$
Here P.V. denotes the principal value and the above integral is defined for $u\in C_c^2(\rn).$ We refer to \cite{pi},\cite{fr},\cite{guide},\cite{sar} and \cite{strict positive} for related work concerning fractional Laplace operator. The constant $C_{n,s}$ is explicitly given by
\begin{equation}
\label{const_flap}
   C_{n,s}=\left(\int_{\R^n}\frac{1-\cos(z_1)}{|z|^{n+2s}}dz\right)^{-1}=
   \frac{s2^{2s}\Gamma\big(\frac{n+2s}{2}\big)}{\pi^{\frac{n}{2}}\Gamma(1-s)}.
\end{equation}
We first list some simple and known properties regarding the $[\cdot]_{s,2,\Omega}$ norm and the fractional Poincar\'e inequality. The regional fractional Poincar\'e inequality on bounded domains is deduced from the fractional Hardy inequality, which we recall here, stated only for the case required here.

\begin{theorem}(Dyda \cite{lk}, see also \cite{dy})
\label{theorem:Dyda fractional Hardy}
Let $\Omega\subset\R^n$ be a bounded open set with Lipschitz boundary $\partial\Omega$ and $\frac{1}{2}<s.$ Then there exists a constant $C$ depending only on $\Omega,n,s$ such that
$$
   \int_{\Omega}\frac{|u(x)|^2}{(\dist(x,\partial\Omega))^{2s}}dx\leq    
   C(\Omega,n,s)\int_{\Omega}\int_{\Omega}
   \frac{|u(x)-u(y)|^2}{|x-y|^{n+2s}}dx\,dy\quad\text{for all }u\in C_c^{\infty}(\Omega).
$$
\end{theorem}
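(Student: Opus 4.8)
\medskip
\noindent\emph{Proof strategy (proposal).}
The plan is to reduce the $n$-dimensional regional fractional Hardy inequality to the one-dimensional fractional Hardy inequality on intervals, by slicing $\Omega$ along every direction. I would start from the elementary integral--geometric identity for the Gagliardo energy: writing $y=x+\rho\omega$ with $\rho>0$, $\omega\in\mathbb{S}^{n-1}$, and then splitting $\R^n=\omega^{\perp}\oplus\R\omega$, one obtains for every $u\in C_c^\infty(\Omega)$
\begin{equation*}
  \int_{\Omega}\int_{\Omega}\frac{|u(x)-u(y)|^2}{|x-y|^{n+2s}}\,dx\,dy
  =\frac12\int_{\mathbb{S}^{n-1}}\int_{\omega^{\perp}}
  \int_{A^{\omega}_{x'}}\int_{A^{\omega}_{x'}}
  \frac{|f_{\omega,x'}(t)-f_{\omega,x'}(\tau)|^2}{|t-\tau|^{1+2s}}\,dt\,d\tau\,d\mathcal{H}^{n-1}(x')\,d\omega ,
\end{equation*}
where $A^{\omega}_{x'}:=\{t\in\R:\,x'+t\omega\in\Omega\}$ is the (open, bounded, since $\Omega$ is bounded) one-dimensional slice and $f_{\omega,x'}(t):=u(x'+t\omega)$; note that $f_{\omega,x'}\in C_c^\infty(A^{\omega}_{x'})$, hence it vanishes near the endpoints of every connected component of $A^{\omega}_{x'}$.

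On each such component interval $I\subset A^{\omega}_{x'}$ I would invoke the one-dimensional fractional Hardy inequality, which for $s>\tfrac12$ is classical with a scale-invariant constant $\kappa_s<\infty$ (see \cite{lk},\cite{dy}):
\begin{equation*}
  \int_{I}\frac{|g(t)|^2}{\dist(t,\R\setminus I)^{2s}}\,dt
  \le\kappa_s\int_{I}\int_{I}\frac{|g(t)-g(\tau)|^2}{|t-\tau|^{1+2s}}\,dt\,d\tau .
\end{equation*}
Summing this over the components of $A^{\omega}_{x'}$ (the products $I\times I$ being pairwise disjoint subsets of $A^{\omega}_{x'}\times A^{\omega}_{x'}$, while on $I$ one has $\dist(t,\R\setminus I)=\dist(t,\R\setminus A^{\omega}_{x'})$), then integrating in $x'\in\omega^{\perp}$ and $\omega\in\mathbb{S}^{n-1}$, substituting back $x=x'+t\omega$, and finally using Fubini to exchange the $\omega$-- and $x$--integrations, the slicing identity yields
\begin{equation*}
  \int_{\Omega}|u(x)|^2\Big(\int_{\mathbb{S}^{n-1}}\frac{d\omega}{d^{\omega}(x)^{2s}}\Big)\,dx
  \le 2\kappa_s\int_{\Omega}\int_{\Omega}\frac{|u(x)-u(y)|^2}{|x-y|^{n+2s}}\,dx\,dy ,
\end{equation*}
where $d^{\omega}(x)$ is the distance from $x$ to $\Omega^{c}$ measured along the line through $x$ in direction $\omega$ (equivalently $d^{\omega}(x)=\dist(t,\R\setminus A^{\omega}_{x'})$ when $x=x'+t\omega$).

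It then remains to prove the geometric estimate
\begin{equation*}
  \int_{\mathbb{S}^{n-1}}\frac{d\omega}{d^{\omega}(x)^{2s}}\ \ge\ c(\Omega,n,s)\,\frac{1}{\dist(x,\partial\Omega)^{2s}}\qquad(x\in\Omega),
\end{equation*}
after which the two displays combine to give exactly the asserted inequality with $C(\Omega,n,s)=2\kappa_s/c(\Omega,n,s)$. For this I would use the Lipschitz character of $\Omega$ through its exterior corkscrew property: if $p\in\partial\Omega$ realizes $\delta:=\dist(x,\partial\Omega)=|x-p|$, there is a ball $B_{c_0\delta}(q)\subset\Omega^{c}$ with $|q-p|<\delta$ (when $\delta$ exceeds the corkscrew scale one instead uses a fixed exterior ball and absorbs powers of $\operatorname{diam}\Omega$, which is finite as $\Omega$ is bounded); then $|q-x|<2\delta$, so the set of $\omega\in\mathbb{S}^{n-1}$ for which the ray $\{x+t\omega:\,t>0\}$ meets $B_{c_0\delta}(q)$ has $\mathcal{H}^{n-1}$--measure bounded below by a constant depending only on $c_0$ and $n$, and for every such $\omega$ the ray leaves $\Omega$ within distance $\le 3\delta$, i.e. $d^{\omega}(x)\le 3\delta$. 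This is the heart of the matter, and the main obstacle: one must make the geometric lemma quantitative in the Lipschitz constant of $\partial\Omega$ (i.e.\ control the proportion of directions from a near-boundary point along which $\Omega$ is ``thin''); the remaining steps are a routine combination of one-dimensional Hardy, the slicing identity, and Fubini. An alternative, more standard route --- covering $\partial\Omega$ by boundary charts, flattening $\partial\Omega$ by bi-Lipschitz maps, applying the half-space fractional Hardy inequality to $\chi_i u$ and an interior estimate to $\chi_0 u$ --- also works, but the non-locality of the Gagliardo energy then leaves a lower-order term $\|u\|_{L^2(\Omega)}^2$ that must be absorbed, which for $s>\tfrac12$ requires the separate fact that constants do not belong to $W^{s,2}_0(\Omega)$; the slicing argument bypasses this.
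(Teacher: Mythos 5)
The paper does not prove this theorem; it is quoted verbatim with attribution to Dyda \cite{lk} (``see also \cite{dy}''), so there is no in-house argument to compare against. Your proposal is nevertheless a correct reconstruction, following essentially the slicing strategy of Loss--Sloane and of Dyda--Frank. The three ingredients you use are all sound: (i) the integral-geometric decomposition of the Gagliardo energy along lines in each direction $\omega\in\sph^{n-1}$ is precisely Lemma~\ref{lemma:Loss Sloan} of the paper (quoted there from Loss and Sloane \cite{loss}), so you could cite it directly rather than rederive it, and you apply it correctly, including the observation that on a connected component $I$ of the slice $A^\omega_{x'}$ one has $\dist(t,\R\setminus I)=\dist(t,\R\setminus A^\omega_{x'})$ because the endpoints of $I$ lie in $\R\setminus A^\omega_{x'}$; (ii) the one-dimensional scale-invariant fractional Hardy inequality on intervals for $s>\tfrac12$ is classical with a constant depending only on $s$; and (iii) the summation over components with $I\times I$ pairwise disjoint in $A^\omega_{x'}\times A^\omega_{x'}$, followed by Fubini, is legitimate. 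You correctly isolate the crux as the geometric estimate
$$
  \int_{\sph^{n-1}}\frac{d\mathcal{H}^{n-1}(\omega)}{d^{\omega}(x)^{2s}}
  \ \geq\ \frac{c(\Omega,n,s)}{\dist(x,\partial\Omega)^{2s}},\qquad x\in\Omega,
$$
and your corkscrew argument delivers it: writing $\delta=\dist(x,\partial\Omega)$ with nearest boundary point $p$, the uniform exterior corkscrew condition of a bounded Lipschitz set produces, for $\delta$ below the corkscrew scale $r_0$, a ball $B_{c_0\delta}(q)\subset\Omega^c$ with $|q-x|<2\delta$, and the cone of directions from $x$ hitting that ball has half-angle at least $\arcsin(c_0/2)$, hence $\mathcal{H}^{n-1}$-measure bounded below by a dimensional constant; for each such $\omega$ the ray exits $\Omega$ within distance $<(2+c_0)\delta$, giving $d^\omega(x)<3\delta$. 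For $\delta\geq r_0$ boundedness of $\Omega$ yields $d^\omega(x)\leq\operatorname{diam}\Omega$ for all $\omega$, and the loss is a fixed factor $(r_0/\operatorname{diam}\Omega)^{2s}$, as you indicate. For the record, Dyda's original 2004 paper \cite{lk} predates the Loss--Sloane identity and is organized differently (closer in spirit to the boundary-flattening route you mention as an alternative, combining a half-space fractional Hardy with a partition of unity and absorbing the resulting $\|u\|_{L^2}^2$ remainder, which for $s>\tfrac12$ uses that constants are not in $W_0^{s,2}(\Omega)$), while \cite{dy} and \cite{loss} are the references nearest to the slicing argument you wrote; both routes are legitimate, and yours has the advantage you point out of avoiding the lower-order term entirely.
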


We mention that the  validity of  fractional Hardy type inequalities on unbounded domains has been characterized in \cite[Theorem 6.1]{vaha1}. Their characterization is in terms of a uniform inverse subadditivity inequality for capacitiy involving the intersection of any compact subset of $\Omega$ with a Whitney decomposition of $\Omega.$  Such a condition is however difficult to verify in practice.

\smallskip

We recall the following definition.

\begin{definition}[\textbf{Finite ball condition}]
\label{def:finite ball cond}
We say that a set $\Omega\subset\R^n$ satisfies the \textit{finite ball condition} if $\Omega$ does not contain arbitrarily large balls, that is
$$
   \sup\{ r:\, B_r(x)\subset\Omega,\; x\in \Omega\}<\infty.
$$
\end{definition}

The finite ball condition plays an important role in the characterization of domains for which the local Poincar\'e inequality holds. It is immediate to see (by domain monotonicity and scaling) that the finite ball condition is necessary for local Poincar\'e to hold. But it is actually equivalent for simply connected domains in dimension 2, see Sandeep-Mancini \cite{Sandeep-Man}.

\begin{proposition}
\label{prop:elementary properties} Let $n\geq 1$ be a positive integer and $0<s<1.$
\smallskip

(i) Let $\Omega\subset\R^n$ be a bounded open Lipschitz set. Then
$$
    P^1_{n,s}(\Omega)>0 \quad\text{ if } \frac{1}{2}<s<1,\quad\text{ and }\quad 
    P^1_{n,s}(\Omega)=0\quad\text{ if } 0<s \leq \frac{1}{2}.
$$
whereas (this does not require $\Omega$ to be Lipschitz)
$$
    P^2_{n,s}(\Omega)>0  \quad\text{ if } 0<s<1.
$$

(ii) Let $\Omega\subset\R^n,$ $t>0$ and $u\in W^{s,2}(\Omega).$ Define $v_t\in W^{s,2}(\Omega)$ by $v_t(x)=u(tx).$ Then
$
   [u]_{s,2,t\Omega}^2=t^{n-2s}[v_t]_{s,2,\Omega}^2,
$
and moreover
$$
    P^1_{n,s}(t\Omega)= \frac{P^1_{n,s}(\Omega)}{t^{2s}},\qquad 
    P^2_{n,s}(t\Omega)= \frac{P^2_{n,s}(\Omega)}{t^{2s}}
.$$

(iii) $P^2_{n,s}$ has the domain monotonicity property, i.e. if $\Omega_1\subset\Omega_2$ then $P^2_{n,s}(\Omega_2)\leq P^2_{n,s}(\Omega_1).$
\smallskip 

(iv) If $\Omega\subset\R^n$ does not satisfy the finite ball condition then
$
    P^1_{n,s}(\Omega)=P^2_{n,s}(\Omega)=0.
$
\end{proposition}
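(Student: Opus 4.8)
The plan is to prove Proposition \ref{prop:elementary properties} part by part, treating the four items essentially independently. The most substantial item is (i); items (ii)--(iv) are short scaling and monotonicity arguments.

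\textbf{Proof of (i).} For the positivity of $P^1_{n,s}$ when $\frac12<s<1$: given $u\in C_c^\infty(\Omega)$, I would first bound $\int_\Omega u^2$ by the weighted integral $\int_\Omega |u(x)|^2/\dist(x,\partial\Omega)^{2s}\,dx$ times $(\operatorname{diam}\Omega)^{2s}$ (since $\dist(x,\partial\Omega)\le\operatorname{diam}\Omega$ for bounded $\Omega$), then invoke the fractional Hardy inequality, Theorem \ref{theorem:Dyda fractional Hardy}, to dominate the weighted integral by $[u]_{s,2,\Omega}^2$ up to the constant $C(\Omega,n,s)$; a density argument extends this to all of $W^{s,2}_0(\Omega)$, giving $P^1_{n,s}(\Omega)>0$. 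For the vanishing when $0<s\le\frac12$: I would exhibit a minimizing sequence. The standard choice is $u_k(x)=\eta(x/k)$ for a fixed cutoff $\eta$ equal to $1$ on a ball and supported in $2\Omega$-type region — more precisely, scale a fixed bump so that $\|u_k\|_{L^2(\Omega)}$ stays bounded below while $[u_k]_{s,2,\Omega}^2\to0$. One computes $[u_k]_{s,2}^2\sim k^{n-2s}[u_1]_{s,2}^2$, which fails when $s\le\frac12$ in dimension one but not in general; the correct construction keeps $u_k\equiv 1$ on all of $\Omega$ except near $\partial\Omega$, using a logarithmic-type cutoff. The key estimate is that for $s\le\frac12$ the $L^2$-mass can be made order one while the Gagliardo energy of a near-constant function tends to zero, which is exactly the borderline failure of the fractional Sobolev embedding controlling constants. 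For $P^2_{n,s}(\Omega)>0$ on any bounded set: since $\Omega$ is bounded, $\Omega\subset B_R$ for some $R$, and by domain monotonicity (item (iii), proved below) $P^2_{n,s}(\Omega)\ge P^2_{n,s}(B_R)$, so it suffices to know $P^2_{n,s}(B_R)>0$; that follows from the fractional Sobolev inequality on $\R^n$ (or Faber--Krahn) since $[u]_{s,2,\R^n}^2\ge S\|u\|_{L^{2^*_s}(\R^n)}^2\ge S|\Omega|^{-2s/n}\|u\|_{L^2(\Omega)}^2$ by H\"older, for $u\in C_c^\infty(\Omega)$.

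\textbf{Proof of (ii).} This is a change of variables. Substituting $x=t x'$, $y=ty'$ in the double integral defining $[u]_{s,2,t\Omega}^2$ gives the Jacobian factor $t^{2n}$ and the kernel scales as $|x-y|^{-(n+2s)}=t^{-(n+2s)}|x'-y'|^{-(n+2s)}$, while $u(tx')=v_t(x')$, yielding the factor $t^{n-2s}$; likewise $\int_{t\Omega}u^2=t^n\int_\Omega v_t^2$. Dividing, the Rayleigh quotient for $t\Omega$ equals $t^{-2s}$ times that for $\Omega$, and taking infima (the map $u\mapsto v_t$ being a bijection of the relevant function spaces) gives the stated identities; the same computation with $[u]_{s,2,\R^n}$ handles $P^2_{n,s}$.

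\textbf{Proof of (iii) and (iv).} For (iii): if $\Omega_1\subset\Omega_2$, then $C_c^\infty(\Omega_1)\subset C_c^\infty(\Omega_2)$, and for $u\in C_c^\infty(\Omega_1)$ one has $\int_{\Omega_1}u^2=\int_{\Omega_2}u^2$ while $[u]_{s,2,\R^n}$ is unchanged; hence the infimum defining $P^2_{n,s}(\Omega_2)$ is over a larger set with the same quotient values, so $P^2_{n,s}(\Omega_2)\le P^2_{n,s}(\Omega_1)$, and this passes to the closures $H^s_{\Omega_i}(\R^n)$. For (iv): if $\Omega$ fails the finite ball condition it contains balls $B_{r_k}(x_k)$ with $r_k\to\infty$; by domain monotonicity for $P^2$ (and, for $P^1$, since a test function supported in $B_{r_k}(x_k)$ lies in $W^{s,2}_0(\Omega)$ with the same Rayleigh quotient as in the ball) together with the scaling of item (ii), $P^i_{n,s}(\Omega)\le P^i_{n,s}(B_{r_k})=r_k^{-2s}P^i_{n,s}(B_1)\to 0$, forcing $P^1_{n,s}(\Omega)=P^2_{n,s}(\Omega)=0$.

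\textbf{Main obstacle.} The only genuinely delicate point is the vanishing of $P^1_{n,s}$ for $0<s\le\frac12$ on a bounded Lipschitz domain: one must design a sequence of test functions that are $L^2$-nontrivial yet have Gagliardo energy tending to zero. The natural candidates are logarithmic truncations $u_k(x)=\min\{1,\, \log(k\,\dist(x,\partial\Omega))/\log k\}$ (or a smoothed version), and the work is in carefully splitting the double integral into near-diagonal and far regions and using $s\le\frac12$ to show the boundary layer contributes $o(1)$ energy while the bulk contributes none; this is where I expect to spend the most effort, and where the Lipschitz regularity of $\partial\Omega$ is used.
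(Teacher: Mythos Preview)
Your approach to (ii), (iii), the positivity of $P^1_{n,s}$ for $s>\tfrac12$, and the positivity of $P^2_{n,s}$ is essentially the same as the paper's. Two points deserve comment.

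\textbf{An actual gap in (iv) for $P^1$.} You claim that a test function $u$ supported in $B_{r_k}(x_k)\subset\Omega$ has ``the same Rayleigh quotient as in the ball''. This is false for the regional quantity: the numerator $[u]_{s,2,\Omega}^2$ integrates over $\Omega\times\Omega$, which contains more pairs than $B_{r_k}\times B_{r_k}$, so $[u]_{s,2,\Omega}^2\ge [u]_{s,2,B_{r_k}}^2$ and you cannot deduce $P^1_{n,s}(\Omega)\le P^1_{n,s}(B_{r_k})$ this way. The paper sidesteps this by observing that $P^1_{n,s}(\Omega)\le P^2_{n,s}(\Omega)$ holds trivially (the numerator for $P^2$ is larger), and then running the monotonicity plus scaling argument only for $P^2$. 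This one-line fix is what you are missing.

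\textbf{A simpler construction for $P^1_{n,s}(\Omega)=0$ when $s<\tfrac12$.} You propose logarithmic truncations $u_k(x)=\min\{1,\log(k\,\dist(x,\partial\Omega))/\log k\}$. That would work, and indeed is the natural choice if one wants a single construction that also covers the borderline $s=\tfrac12$. But for $s<\tfrac12$ the paper uses a much more elementary sequence: take $u_\delta$ to be a smooth approximation of $\chi_\Omega$ with $u_\delta\equiv 1$ on $\{\dist(\cdot,\partial\Omega)>\delta\}$ and $|\nabla u_\delta|\le 2/\delta$. Splitting the double integral at $|x-y|=\delta$, the near-diagonal piece is controlled by $|\nabla u_\delta|^2\int_{|z|<\delta}|z|^{2-n-2s}dz$ and the far piece by $\int_{|z|>\delta}|z|^{-n-2s}dz$; both contribute $O(\delta^{1-2s})$ times $\mathcal{L}^n(T_\delta)\sim\delta$, hence $[u_\delta]_{s,2,\Omega}^2=O(\delta^{1-2s})\to 0$. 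No logarithm is needed, and the Lipschitz hypothesis is used only to get $\mathcal{L}^n(T_\delta)=O(\delta)$. For $s=\tfrac12$ the paper simply quotes the density of $C_c^\infty(\Omega)$ in $W^{1/2,2}(\Omega)$ (Lions--Magenes), from which $P^1_{n,1/2}(\Omega)=0$ is immediate since the constant $1$ has zero seminorm. So what you identified as the ``main obstacle'' is in fact routine once you choose the right test functions; your logarithmic approach is correct but heavier than necessary.
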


\begin{remark}
\label{remark:bounded perimeter}
The hypothesis $\Omega$ bounded and Lipschitz in part (i)  for $P^1_{n,s}(\Omega)=0$ in the case $0<s<\frac{1}{2}$ can be weakened significantly, see \cite{rupert} Lemma A.2. It is sufficient to require that $\Omega$ has finite measure and that
\begin{equation}
 \label{eq:rupert measure condition}
    \mathcal{L}^n(\{x\in\Omega:\,\dist(x,\Omega^c)<\delta\})=o(\delta^{2s}).
\end{equation}
For a bounded Lipschitz domain one has the estimate $\mathcal{L}^n(\{x\in\Omega:\, \dist(x,\Omega^c)<\delta\})\leq C \mathcal{H}^{n-1}(\partial \Omega) \delta.$ Such an estimate remains true if $\partial\Omega$ is only piecewise Lipschitz and the condition \eqref{eq:rupert measure condition} is satisfied as long as $\mathcal{H}^{n-1}(\partial \Omega)<\infty.$ We will need this observation to use the result for the intersection of two Lipschitz domains.
\end{remark}

\begin{remark}
\label{remark:finite ball cond in 1 dim}
It is immediate to see that in $1$ dimension it holds that
$$ 
    P^1_{1,s}(\Omega)>0\quad\Leftrightarrow\quad \Omega\text{ satisfies finite ball 
    condition}.
$$
The necessity of the condition is (iv) of the previous proposition. For the sufficiency, assume that $\Omega\subset\R^1$ is an open set satisfying the finite ball condition. Hence there exists a countable number of open intervals $I_k$ such that
$$
   \Omega=\bigcup_{k=1}^{\infty}I_k,\quad I_k\cap I_j=\emptyset\text{ if }k\neq j,\quad 
   \sup_k(\operatorname{length}(I_k))=m<\infty.
$$
Hence, using (ii), we get for $u\in C_c^{\infty}(\Omega)$ 
\begin{align*}
   [u]^2_{s,2,\Omega}\geq &\frac{C_{1,s}}{2}
    \sum_{k=1}^{\infty}\int_{I_k}\int_{I_k}\frac{|u(x)-u(y)|^2}
    {|x-y|^{1+2s}}dx\,dy
    \geq 
    \sum_{k=1}^{\infty} P^1_{1,s}(I_k)\int_{I_k}u^2
    \smallskip
    \\
    =& \sum_{k=1}^{\infty} 
    \frac{1}{(\operatorname{length}(I_k))^{2s}}P^1_{1,s}((0,1))\int_{I_k}u^2
    \geq \frac{P^1_{1,s}((0,1))}{m^{2s}}\int_{\Omega}u^2,
\end{align*}
which gives
\begin{equation}
  \label{eq:P11 estimated by m}
    P^1_{1,s}(\Omega)\geq \frac{P^1_{1,s}((0,1))}{m^{2s}}.
\end{equation}
\end{remark}

\begin{proof}[Proof of Proposition \ref{prop:elementary properties}.]
(i) \textit{Case  $\frac{1}{2}<s < 1$:}  
This follows from Theorem \ref{theorem:Dyda fractional Hardy} and the estimate $\dist(x,\partial\Omega)\leq \frac{1}{2} \operatorname{diam(\Omega)}$ for any $x\in\Omega.$ Hence
$$
   P^1_{n,s}(\Omega)\geq 
   \left(\frac{1}{2}\operatorname{diam}(\Omega)\right)^{-2s} C^{-1},
$$
where $C=C(\Omega,n,s)$ is the constant in Theorem \ref{theorem:Dyda fractional Hardy} .
\smallskip

\textit{Case $0<s \leq \frac{1}{2}$:} It is well known  [see, Theorem 11.1 in \cite{Lions-Magenes}] that $C_c^\infty(\Omega)$ is dense in $W^{s,2}(\Omega)$ if and only if $s \leq \frac{1}{2}.$ Notice  that the  constant function $1 \in W^{s,2}(\Omega)$ and $[1]_{s,2,\Omega} = 0$. Hence the claim follows.

However, we present a detailed proof for the case $s \in (0,\frac{1}{2})$, as it will be useful later. Let $T_{\delta}=\{x\in\Omega:\, \dist(x,\partial\Omega)\leq \delta\}$ be some small interior tubular neighborhood of $\partial\Omega$ for $\delta>0.$ $u_{\delta}$ shall be an approximation, as $\delta\to 0,$ of the characteristic function of $\Omega:$ 
\begin{equation}
 \label{eq:properties u delta}
    u_{\delta}\in C_c^{\infty}(\Omega),\quad 0\leq u_{\delta}\leq 1,\quad 
    u_{\delta}=1\text{ in }\Omega\setminus T_{\delta}, \quad 
    \frac{|u_{\delta}(x)-u_{\delta}(y)|}{|x-y|}\leq |\nabla u_{\delta}|\leq \frac{2}{\delta}\text{ in }T_{\delta}\,.
\end{equation}
Hence for $\delta\to 0$ one has $\|u_{\delta}\|_{L^2(\Omega)}\to |\Omega|$ and it is sufficient to show that $[u_{\delta}]_{s,2,\Omega}^2\to 0.$ As $|u_{\delta}(x)-u_{\delta}(y)|=0$ for $x,y\in \Omega\setminus T_{\delta}$ we get
$$
   [u_{\delta}]_{s,2,\Omega}^2\leq 2 \int_{T_{\delta}} dx\int_{\Omega}dy
   \frac{|u_{\delta}(x)-u_{\delta}(y)|^2}{|x-y|^{n+2s}}=A+B,
$$
where
\begin{align*}
  A=\int_{T_{\delta}} dx\int_{\{ y\in \Omega ; \  |x-y|<\delta \}}dy
   \frac{|u_{\delta}(x)-u_{\delta}(y)|^2}{|x-y|^{n+2s}}
   \textrm{ and }
   B=\int_{T_{\delta}} dx\int_{\{ y\in \Omega ; \  |x-y|<\delta \}}dy
   \frac{|u_{\delta}(x)-u_{\delta}(y)|^2}{|x-y|^{n+2s}}.
\end{align*}
For $A$ one uses the last property of \eqref{eq:properties u delta} and the estimate
$$
    \int_{\{ y\in \Omega ; \  |x-y|<\delta \}}dy
   \frac{1}{|x-y|^{n-2+2s}}
   \leq 
   \int_{ |x-y|<\delta}dy
   \frac{1}{|x-y|^{n-2+2s}}
   =\int_{B_{\delta}(0)} \frac{dy}{|y|^{n-2+2s}}.
$$
After radial integration and using that $|T_{\delta}|$ is of the order $\mathcal{H}^{n-1}(\partial\Omega)\delta$ one gets that $A\leq C\delta^{1-2s}$ for some constant $C=C(n,s,\Omega).$ For $B$ one uses the estimates $|u_{\delta}(x)-u_{\delta}(y)|\leq 1$ and
$$
   \int_{\{ y\in \Omega ; \  |x-y|<\delta \}}dy
   \frac{1}{|x-y|^{n+2s}}
   \leq
   \int_{ |x-y|>\delta}dy
   \frac{1}{|x-y|^{n+2s}}
   =\int_{B_{\delta}(x)^c}\frac{1}{|x-y|^{n+2s}}dy.
$$
After radial integration, one proceeds as for estimating $A,$ and concludes that $B$ is also of order $\delta^{1-2s}.$
\smallskip

The last statement concerning $P_{s,2}^2(\Omega)$ follows from the fractional Sobolev embedding in $\R^n$ and the fact that $2\leq 2^{\ast}:=2n/(n-2s),$ and that $L^{2^{\ast}}$ is continuously embedded into $L^2$ if $\Omega$ is bounded.
\smallskip

(ii) This is immediate by change of variables.
\smallskip

(iii) Follows directly from the definition.

\smallskip

(iv) Let $x\in\Omega$  and $r>0$ be such that $B_r(x)\subset\Omega.$ Then by (iii) and (ii) we get that
$$
   P^2_{n,s}(\Omega)\leq P^2_{n,s}(B_r(x))=P^2_{n,s}(B_r(0))=\frac{P^2_{n,s}(B_1(0))}{r^{2s}}
   .
$$
As $r$ can be chosen arbitrarily big  this proves that $P^2_{n,s}(\Omega)=0.$ By definition $P^1_{n,s}(\Omega)\leq P^2_{n,s}(\Omega),$ so we also have that $P^1_{n,s}(\Omega)=0.$ 
\end{proof}

The following lemma relates the constants $C_{n,s}$ for different values of the dimension $n.$ 
It is a generalization of the Lemma 3.1 in \cite{pi}.
We will use these algebraic relations several times.

\begin{lemma}
\label{reduction formula}
    For each $m, n\in\N$ with $1\leq m<n$  and $0<s<1$, let $C_{n,s}$ be the constant \eqref{const_flap} appearing in the definition of the $[\cdot]_{s,2,\Omega}$ norm. The following two identities hold:
\smallskip
    
(i) $
    C_{n,s}\Theta_{m,n}=C_{n-m,s},
    \quad\text{ where }\quad \Theta_{m,n}=\mathcal{H}^{m-1}
    \left(\sph^{m-1}\right)\int_0^\infty\frac{t^{m-1}}   
    {(1+t^2)^\frac{n+2s}{2}}\;dt
$
where $\sph^{m-1}$ is the unit sphere in the Euclidean space $\re^m.$
\smallskip

(ii) If $a>0$ and $z\in\R^m$ then
\begin{align*}
  \int_{\R^m}\frac{dx}{\left(1+\frac{|x-z|^2}{a^2}\right)^{\frac{n+2s}{s}}}
  =a^m \Theta_{m,n}.
\end{align*}

\end{lemma}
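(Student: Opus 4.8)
The plan is to prove the two identities essentially by computing the same integral in two ways, using polar coordinates in the $\re^m$ factor and the Beta-function formulas \eqref{eq:properties of Beta} together with the explicit value \eqref{const_flap} of $C_{n,s}$. I would start with part (ii), since part (i) will fall out of it (or can be proved in parallel by the same computation). For (ii), by the translation invariance of Lebesgue measure we may assume $z=0$, and then the integrand is radial: switching to polar coordinates $x = \rho\,\theta$ with $\rho=|x|$, $\theta\in\sph^{m-1}$, and then rescaling $\rho = a t$ gives
\begin{align*}
   \int_{\R^m}\frac{dx}{\left(1+\frac{|x|^2}{a^2}\right)^{\frac{n+2s}{2}}}
   = \mathcal{H}^{m-1}(\sph^{m-1})\int_0^\infty \frac{\rho^{m-1}}{(1+\rho^2/a^2)^{\frac{n+2s}{2}}}\,d\rho
   = a^m\,\mathcal{H}^{m-1}(\sph^{m-1})\int_0^\infty\frac{t^{m-1}}{(1+t^2)^{\frac{n+2s}{2}}}\,dt,
\end{align*}
which is exactly $a^m\Theta_{m,n}$ by the definition of $\Theta_{m,n}$ in (i). (I should flag the apparent typo in the statement: the exponent $\frac{n+2s}{s}$ in (ii) should read $\frac{n+2s}{2}$, matching the definition of $\Theta_{m,n}$ and the $|x-y|^{n+2s}$ weight; I would write the proof for $\frac{n+2s}{2}$.) This part is really just change of variables and carries no genuine difficulty.

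For part (i), the cleanest route is to relate $C_{n,s}$ and $C_{n-m,s}$ through their defining integrals $C_{n,s}^{-1}=\int_{\R^n}\frac{1-\cos z_1}{|z|^{n+2s}}\,dz$ from \eqref{const_flap}. Write $z=(z',\zeta)\in\re^{n-m}\times\re^m$ with $z'$ carrying the first coordinate $z_1$, so $|z|^2=|z'|^2+|\zeta|^2$, and apply Fubini:
\begin{align*}
   C_{n,s}^{-1} = \int_{\R^{n-m}}(1-\cos z_1)\left(\int_{\R^m}\frac{d\zeta}{(|z'|^2+|\zeta|^2)^{\frac{n+2s}{2}}}\right)dz'.
\end{align*}
The inner integral is evaluated by pulling out $|z'|$: with $\zeta = |z'|\,\eta$ it equals $|z'|^{-(n+2s)}|z'|^m\int_{\R^m}\frac{d\eta}{(1+|\eta|^2)^{\frac{n+2s}{2}}} = |z'|^{-((n-m)+2s)}\,\Theta_{m,n}$, where I recognize the remaining integral as $\Theta_{m,n}$ via the same polar-coordinate computation as in (ii) (indeed it is the $a=1$ case of (ii) in dimension $m$). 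Substituting back,
\begin{align*}
   C_{n,s}^{-1} = \Theta_{m,n}\int_{\R^{n-m}}\frac{1-\cos z_1}{|z'|^{(n-m)+2s}}\,dz' = \Theta_{m,n}\,C_{n-m,s}^{-1},
\end{align*}
which rearranges to $C_{n,s}\Theta_{m,n}=C_{n-m,s}$, as claimed.

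There is no serious obstacle here; the only things to be careful about are (a) the harmless typo in the exponent of (ii), which I would correct in the statement or silently interpret; (b) checking finiteness/convergence so that Fubini applies — the integrand $\frac{1-\cos z_1}{|z|^{n+2s}}$ is integrable on $\R^n$ (this is precisely the content of \eqref{const_flap}, where $C_{n,s}^{-1}$ is a finite positive number), and $\int_0^\infty t^{m-1}(1+t^2)^{-(n+2s)/2}\,dt$ converges since $m<n$ forces the exponent $n+2s-(m-1) > 2s+1 > 1$ at infinity; and (c) if one prefers, one can additionally record the closed form $\Theta_{m,n}=\mathcal{H}^{m-1}(\sph^{m-1})\cdot\tfrac12 B\!\left(\tfrac{m}{2},\tfrac{n-m}{2}+s\right)$ by the substitution $t^2 = \tau/(1-\tau)$ in \eqref{eq:properties of Beta}, and then double-check $C_{n,s}\Theta_{m,n}=C_{n-m,s}$ directly from the Gamma-function expression in \eqref{const_flap} and \eqref{eq:surface of S n-1} as an independent sanity check. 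I would present the Fubini argument as the main proof and mention the Beta-function form as a remark.
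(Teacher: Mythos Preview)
Your proof of (ii) is exactly the paper's: translation invariance to $z=0$, polar coordinates, and the rescaling $\rho=at$. (You are right that the exponent $\frac{n+2s}{s}$ in the statement is a typo for $\frac{n+2s}{2}$.)

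For (i) your argument is correct but takes a genuinely different route from the paper. The paper evaluates $\Theta_{m,n}$ explicitly: the substitution $t=\tan\theta$ turns $\int_0^\infty t^{m-1}(1+t^2)^{-(n+2s)/2}\,dt$ into $\int_0^{\pi/2}(\sin\theta)^{m-1}(\cos\theta)^{n-m+2s-1}\,d\theta$, hence $\Theta_{m,n}=\frac12\mathcal{H}^{m-1}(\sph^{m-1})\,B\!\left(\tfrac{m}{2},\tfrac{n-m+2s}{2}\right)$, and then the identity $C_{n,s}\Theta_{m,n}=C_{n-m,s}$ is checked algebraically from the Gamma-function formula in \eqref{const_flap} together with \eqref{eq:surface of S n-1} and \eqref{eq:properties of Beta}. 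You instead split the defining integral $C_{n,s}^{-1}=\int_{\R^n}\frac{1-\cos z_1}{|z|^{n+2s}}\,dz$ along $\R^{n-m}\times\R^m$, evaluate the inner $\R^m$-integral as $|z'|^{-(n-m+2s)}\Theta_{m,n}$ via (ii), and recognize the remaining $\R^{n-m}$-integral as $C_{n-m,s}^{-1}$. This is more conceptual and avoids any Gamma-function bookkeeping; the paper's computation, on the other hand, delivers the closed Beta-function form of $\Theta_{m,n}$ as a byproduct (which you only mention as an optional sanity check). One small simplification in your write-up: since $1-\cos z_1\ge 0$, Tonelli applies outright and you need not separately justify integrability before interchanging the integrals.
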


\begin{proof}
Using the  change of variables $t=\tan\theta$ in the expression $\Theta_{m,n}$, we obtain 
\begin{align*}
     \Theta_{m,n} =   
     \int_{\sph^{m-1}} d\sigma 
     \int_0^\frac{\pi}{2}(\sin\theta)^{m-1}(\cos\theta)^{n-m+2s-1}\;d\theta
      =\frac{1}{2}B\bigg(\frac{m}{2},\frac{n-m+2s}{2}\bigg) 
      \frac{2\pi^{\frac{m}{2}}}{\Gamma(\frac{m}{2})}.
\end{align*}
From the definition of $C_{n,s}$ in \eqref{const_flap}, and formulas \eqref{eq:surface of S n-1} and \eqref{eq:properties of Beta} we get the desired result.
\smallskip 

(ii) The integral clearly does not depend on $z.$ So taking $z=0,$ the identity follows  immediately by change of variables and radial integration.
\end{proof}

\smallskip

In what follows we will use the following abbreviations, for $x=(x_1,\ldots,x_n)\in\R^n$ we write
$ x=(x_1,x')$. We next prove an important lemma regarding domain symmetrization. In addition to its usefulness in proving Theorem \ref{theorem:general s smaller half in tube}, this may have an independent interest when dealing with regional fractional Poincar\'e and fractional Poincar\'e inequalities for general domains. 
 
\begin{definition}
\label{def:cylindrical Schwarz symm}
Let $\Omega\subset\R^n$ be a measurable set. We define $\Omega^*,$ its \textit{cylindrical Schwarz symmetrization}, as the set which is rotationally symmetric with respect to the $x_1$ axis and $\mathcal{H}^{n-1}(\Omega\cap \{x_1=R\})=\mathcal{H}^{n-1}(\Omega^*\cap \{x_1=R\})$ for all $R\in\R,$ More precisely,
$$
    \Omega^*=
    \left\{(x_1,x')\in\R^n:\, x'\in \left(\Omega_{x_1}\right)_{n-1}^{\star}\right\}, \text{ where }
    \Omega_{x_1}
    =
    \{x'\in\R^{n-1}:\,(x_1,x')\in\Omega\}
$$
and $(A)_{n-1}^{\star}$ is the standard Schwarz symmetrization of $A$ in $\R^{n-1},$ i.e. $A$ is replaced by a ball of same $\mathcal{L}^{n-1}$ measure and centered at the origin.
\end{definition}

\begin{lemma}
\label{lemma:cylindrical Schwarz symm}
Let $\Omega\subset\R^n$ be a measurable set and $0< s.$ Then for any two disjoint sets $I,J\subset\R$ 
\begin{equation}
 \label{lemma:eq:schwarz symm}
   \int_{\Omega\cap\{x_1\in I\}}dx\int_{\Omega\cap\{x_1\in J\}}dy
   \frac{1}{|x-y|^{n+2s}}
   \leq
   \int_{\Omega^*\cap\{x_1\in I\}}dx\int_{\Omega^*\cap\{x_1\in J\}}dy
   \frac{1}{|x-y|^{n+2s}}.
\end{equation}
\end{lemma}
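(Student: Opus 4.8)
The plan is to reduce the two-dimensional slice-interaction estimate to a one-dimensional rearrangement inequality on each fiber. Fix $x_1\in I$ and $y_1\in J$; since $I,J$ are disjoint we have $x_1\neq y_1$, so $|x_1-y_1|=:\rho>0$ is a fixed positive number. Writing $x=(x_1,x')$ and $y=(y_1,y')$ with $x',y'\in\R^{n-1}$, we have $|x-y|^2=\rho^2+|x'-y'|^2$, hence
$$
  \int_{\Omega\cap\{x_1\in I\}}dx\int_{\Omega\cap\{x_1\in J\}}dy\,\frac{1}{|x-y|^{n+2s}}
  =\int_I dx_1\int_J dy_1\left(\int_{\Omega_{x_1}}dx'\int_{\Omega_{y_1}}dy'\,\frac{1}{(\rho^2+|x'-y'|^2)^{\frac{n+2s}{2}}}\right).
$$
By Fubini (all integrands being nonnegative, so no integrability issue obstructs the interchange), it suffices to prove the inner inequality fiberwise: for each fixed $\rho>0$ and each pair of measurable sets $A=\Omega_{x_1}$, $B=\Omega_{y_1}$ in $\R^{n-1}$,
$$
  \int_A dx'\int_B dy'\,\frac{1}{(\rho^2+|x'-y'|^2)^{\frac{n+2s}{2}}}
  \;\le\;
  \int_{A^\star} dx'\int_{B^\star} dy'\,\frac{1}{(\rho^2+|x'-y'|^2)^{\frac{n+2s}{2}}},
$$
where $A^\star,B^\star$ are the $(n-1)$-dimensional Schwarz symmetrizations; integrating this back in $x_1,y_1$ and recognizing $(\Omega_{x_1})^\star=(\Omega^*)_{x_1}$ gives the claim.

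The fiberwise inequality is a standard two-set Riesz rearrangement statement: writing $K(z):=(\rho^2+|z|^2)^{-(n+2s)/2}$, the quantity is $\int\int \mathbf{1}_A(x')\,K(x'-y')\,\mathbf{1}_B(y')\,dx'\,dy'$, and the Riesz rearrangement inequality asserts
$$
  \int\int \mathbf{1}_A(x')\,K(x'-y')\,\mathbf{1}_B(y')\,dx'\,dy'
  \;\le\;
  \int\int \mathbf{1}_{A^\star}(x')\,K^\star(x'-y')\,\mathbf{1}_{B^\star}(y')\,dx'\,dy',
$$
for any nonnegative measurable $A,B,K$. Since $K$ is already radially symmetric and strictly radially decreasing, $K^\star=K$, so the right-hand side is exactly what we want. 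I would cite Lieb–Loss, \emph{Analysis}, Theorem 3.7 (or Riesz's original inequality), for this; one should note that $A,B$ may have infinite measure, but the Riesz rearrangement inequality still applies (symmetrizing a set of infinite measure to all of $\R^{n-1}$), and in that degenerate case the inequality is trivial since the left side is then also bounded by the integral over all of $\R^{n-1}\times\R^{n-1}$.

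The main point requiring a little care — rather than a genuine obstacle — is the measurability and the Fubini reduction: one must check that $x_1\mapsto \Omega_{x_1}$ produces measurable slices for a.e.\ $x_1$ (true since $\Omega$ is measurable, by Fubini's theorem), that $(\Omega^*)_{x_1}=(\Omega_{x_1})^\star$ up to null sets (immediate from Definition \ref{def:cylindrical Schwarz symm}), and that the double integral defining $\Omega^*$ is itself measurable in $(x_1,y_1)$ so that the outer integration is legitimate. All of these are routine consequences of Fubini's theorem and the standard fact that the map sending a measurable set to its Schwarz symmetrization preserves measure and hence the layer-cake structure. Once the fiberwise Riesz inequality is invoked, no further estimate is needed; in particular, unlike the lower-bound arguments elsewhere in the paper, there is no competition between directions to control here, because $\rho$ is frozen before symmetrizing.
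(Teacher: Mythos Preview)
Your proof is correct and follows essentially the same approach as the paper: Fubini to reduce to a fiberwise estimate for fixed $(x_1,y_1)$, then the Riesz rearrangement inequality applied to $\chi_{\Omega_{x_1}}$, $\chi_{\Omega_{y_1}}$, and the radially decreasing kernel $K(z)=(\rho^2+|z|^2)^{-(n+2s)/2}$ with $K^\star=K$. Your version is slightly more careful about the role of disjointness (ensuring $\rho>0$) and the measurability bookkeeping, but the argument is the same.
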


\begin{proof}
We write the left hand side of \eqref{lemma:eq:schwarz symm} as
\begin{align*}
   \int_{I}dx_1\int_{\Omega_{x_1}}dx'
   \int_Jdy_1\int_{\Omega_{y_1}}dy'\frac{1}{|x-y|^{n+2s}}
   =&
    \int_Idx_1
   \int_Jdy_1\int_{\Omega_{x_1}}dx'\int_{\Omega_{y_1}}
   dy'\frac{1}{|x-y|^{n+2s}}.
\end{align*}
Let $\chi_A$ denote the characteristic function of  a set $A$ and abbreviate for each fixed $(x_1,y_1)$ the function
$$
    h_{(x_1,y_1)}(z):= \frac{1}{\left((x_1-y_1)^2+|z|^2\right)^{(n+2s)/2}},
    \qquad z\in\R^{n-1}.
$$
 $h$ is a radially decreasing symmetric function of $z$, so that $h^{\star}=h.$
Then the left hand side of \eqref{lemma:eq:schwarz symm}
can be written as
$$
   \int_I dx_1
   \int_J dy_1\left(\int_{\R^{n-1}}\int_{\R^{n-1}}\chi_{\Omega_{x_1}}(x')
   \chi_{\Omega_{y_1}}(y')
   h_{(x_1,y_1)}(|x'-y'|)dx'dy'\right).
$$
In the same way the right hand side of \eqref{lemma:eq:schwarz symm} can be expressed, by replacing $\Omega$ by $\Omega^*.$ Thus the lemma follows from the Riesz rearrangement inequality.
\end{proof}
\section{Proof of Theorem \ref{Poincare 1} and Theorem \ref{Poincare gen domain 1}}
\label{section:Proof 1.1 and 1.2}

We start by giving some propositions and lemmas that will be useful in the proof of Theorem \ref{Poincare 1} and Theorem \ref{Poincare gen domain 1}.

\begin{proposition}\label{useful prop}
   Let  $1\leq m <n $ be two integers, $0<s<1$ and $\Omega_{\infty}=\R^m\times\omega$ be a strip in $\R^n$ where $\omega\subset\R^{n-m}$ is a bounded open set. Then we have 
    \begin{align*}
    P^1_{n,s}(\omegainfty)\leq P^1_{n-m,s}(\omega)\, .
    \end{align*}
    
        
\end{proposition}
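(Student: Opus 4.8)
The plan is to show $P^1_{n,s}(\omegainfty) \leq P^1_{n-m,s}(\omega)$ by constructing, for any admissible test function $\phi$ on the cross-section $\omega$, a sequence of test functions on $\omegainfty = \R^m \times \omega$ whose Rayleigh quotient converges to $[\phi]_{s,2,\omega}^2 / \int_\omega \phi^2$. Since the quotient on the cross-section can be taken arbitrarily close to $P^1_{n-m,s}(\omega)$, this yields the claim. The natural ansatz is a product $u_\ell(x_1,x') = \psi_\ell(x_1)\,\phi(x')$, where $x_1 \in \R^m$, $x' \in \omega$, and $\{\psi_\ell\}_{\ell \in \N} \subset C_c^\infty(\R^m)$ is a sequence of cutoff functions that are identically $1$ on the ball $B_m(0,\ell)$, supported in $B_m(0,2\ell)$, with $\|\grad \psi_\ell\|_\infty \leq C/\ell$.

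First I would fix $\phi \in C_c^\infty(\omega)$ with $\phi \not\equiv 0$ and set $u_\ell = \psi_\ell \otimes \phi \in C_c^\infty(\omegainfty) \subset W_0^{s,2}(\omegainfty)$. The denominator is straightforward: $\int_{\omegainfty} u_\ell^2 = \big(\int_{\R^m}\psi_\ell^2\big)\big(\int_\omega \phi^2\big)$, and $\int_{\R^m}\psi_\ell^2$ grows like $|B_m(0,\ell)| \sim c\,\ell^m$ up to lower-order corrections from the annular transition region. For the numerator, I would split the Gagliardo seminorm using the pointwise identity
$$
  |u_\ell(x)-u_\ell(y)|^2 \leq 2\,\psi_\ell(x_1)^2\,|\phi(x')-\phi(y')|^2 + 2\,\phi(y')^2\,|\psi_\ell(x_1)-\psi_\ell(y_1)|^2,
$$
so that $[u_\ell]_{s,2,\omegainfty}^2 \leq \frac{C_{n,s}}{2}(2 I_\ell + 2 J_\ell)$ with $I_\ell$ carrying the $\phi$-differences and $J_\ell$ the $\psi_\ell$-differences. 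For $I_\ell$, writing $x=(x_1,x')$, $y=(y_1,y')$ and integrating first in $x_1,y_1$ over $\R^m$, the kernel $|x-y|^{-(n+2s)} = ((x_1-y_1)^2+|x'-y'|^2)^{-(n+2s)/2}$ integrates against $\psi_\ell(x_1)^2$; after the change of variables $x_1 - y_1 = |x'-y'| t$ (radial in $\R^m$) one recognizes exactly the constant $\Theta_{m,n}$ from Lemma \ref{reduction formula}, and the reduction identity $C_{n,s}\Theta_{m,n} = C_{n-m,s}$ converts $\frac{C_{n,s}}{2} \cdot 2 I_\ell$ into $\big(\int_{\R^m}\psi_\ell^2\big)\,[\phi]_{s,2,\omega}^2$ plus an error, since $\int_{\R^m}\psi_\ell(x_1)^2\,dx_1$ differs from what appears only on the transition annulus (the contribution there is $O(\ell^{m-1})$ relative to the leading $O(\ell^m)$). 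For $J_\ell$, I would use $|\psi_\ell(x_1)-\psi_\ell(y_1)| \leq \min\{2, (C/\ell)|x_1-y_1|\}$ and $\phi \in L^2(\omega)$ bounded with bounded support to show $J_\ell = o(\ell^m)$; concretely, bounding $\int_{\R^m}\int_{\R^m}\frac{\min\{4,(C/\ell)^2|x_1-y_1|^2\}}{((x_1-y_1)^2+|x'-y'|^2)^{(n+2s)/2}}\,dx_1\,dy_1$ and integrating in $x',y'\in\omega$ gives a bound of order $\ell^{m-2s}$ when $s<1$ — in any case strictly smaller order than $\ell^m$ — so $J_\ell/\int_{\omegainfty}u_\ell^2 \to 0$.

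Putting the pieces together, $\frac{[u_\ell]_{s,2,\omegainfty}^2}{\int_{\omegainfty}u_\ell^2} \leq \frac{[\phi]_{s,2,\omega}^2 \int_{\R^m}\psi_\ell^2 + o(\ell^m)}{\int_{\R^m}\psi_\ell^2 \int_\omega \phi^2} \xrightarrow{\ell\to\infty} \frac{[\phi]_{s,2,\omega}^2}{\int_\omega \phi^2}$, and taking the infimum over $\phi$ gives $P^1_{n,s}(\omegainfty) \leq P^1_{n-m,s}(\omega)$. The main obstacle I anticipate is the bookkeeping of the transition-region error terms: one must be careful that replacing $\int_{\R^m}\psi_\ell(x_1)^2\,dx_1$ by the volume factor that naturally emerges from the change of variables in $I_\ell$ (and controlling the mismatch where $\psi_\ell$ is not identically $1$) is genuinely lower order, and that the cross term and the $J_\ell$ term do not secretly contribute at order $\ell^m$. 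A clean way to handle this is to choose $\psi_\ell$ with transition layer of fixed width (say $\psi_\ell = 1$ on $B_m(0,\ell)$, $0$ outside $B_m(0,\ell+1)$, $|\grad\psi_\ell|\leq 2$) rather than scaling the gradient with $\ell$; then $J_\ell$ is supported where $x_1$ or $y_1$ lies in a unit-width annulus of volume $O(\ell^{m-1})$, which manifestly makes $J_\ell = O(\ell^{m-1})$ after using $s<1$ to integrate the kernel, and similarly the $I_\ell$ error is $O(\ell^{m-1})$, both negligible against $\int_{\R^m}\psi_\ell^2 \sim c\,\ell^m$.
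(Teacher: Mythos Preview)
Your overall strategy --- product ansatz $u_\ell=\psi_\ell\otimes\phi$ with a cutoff $\psi_\ell$ in $\R^m$ whose support grows --- is exactly the paper's, and your treatment of $J_\ell$ is fine. But there is a real gap in the main term: the crude splitting
\[
  |u_\ell(x)-u_\ell(y)|^2\leq 2\,\psi_\ell(x_1)^2|\phi(x')-\phi(y')|^2+2\,\phi(y')^2|\psi_\ell(x_1)-\psi_\ell(y_1)|^2
\]
costs you a factor $2$ that does \emph{not} disappear. If you actually carry out the computation of $\frac{C_{n,s}}{2}\cdot 2I_\ell$ (integrate first in $y_1\in\R^m$ using Lemma~\ref{reduction formula}(ii), then in $x_1$), you get
\[
  C_{n,s}\,I_\ell
  =C_{n,s}\Theta_{m,n}\Big(\int_{\R^m}\psi_\ell^2\Big)\int_\omega\!\int_\omega\frac{|\phi(x')-\phi(y')|^2}{|x'-y'|^{n-m+2s}}\,dx'dy'
  =2\Big(\int_{\R^m}\psi_\ell^2\Big)[\phi]_{s,2,\omega}^2,
\]
not $\big(\int\psi_\ell^2\big)[\phi]_{s,2,\omega}^2$ as you assert. (Note also that this identity is \emph{exact}: there is no ``transition annulus'' error here, since the $x_1$-integral is simply $\int_{\R^m}\psi_\ell^2$.) Your argument therefore only yields $P^1_{n,s}(\omegainfty)\leq 2\,P^1_{n-m,s}(\omega)$.

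The fix, which is what the paper does, is to expand exactly:
\[
  u_\ell(x)-u_\ell(y)=\psi_\ell(y_1)\big(\phi(x')-\phi(y')\big)+\phi(x')\big(\psi_\ell(x_1)-\psi_\ell(y_1)\big),
\]
so that $[u_\ell]_{s,2,\omegainfty}^2=I_1+I_2+I_3$ with a cross term $I_3$. Then $I_1=\big(\int_{\R^m}\psi_\ell^2\big)[\phi]_{s,2,\omega}^2$ on the nose, $I_2\leq C[\psi_\ell]_{s,2,\R^m}^2=O(\ell^{m-2s})$, and $|I_3|\leq 2\sqrt{I_1I_2}=o(\ell^m)$ by Cauchy--Schwarz. (The paper streamlines the bookkeeping further by taking $v_\ell(x_1)=\ell^{-m/2}v(x_1/\ell)$ with $\int_{\R^m}v^2=1$, so that all $L^2$ norms are $1$ and $I_2\leq \ell^{-2s}[v]_{s,2,\R^m}^2$.) With this correction your plan goes through.
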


\begin{proof}
\textit{Step 1:} It is sufficient to show that for any $W\in  C_c^{\infty}(\omega)$ and $\epsilon>0$ there exists $u\in C_c^{\infty}(\Omega_{\infty})$ such that
$$
   \frac{[u]_{s,2,\Omega_{\infty}}^2}{\|u\|^2_{L^2(\Omega_{\infty})}}\leq
   \frac{[W]^2_{s,2,\omega}}{\|W\|^2_{L^2(\omega)}}+\epsilon.
$$
Take a function $v\in C_c^{\infty}(\R^m)$ such that
$$
    \int_{\R^m}|v|^2=1\quad\text{ and define for $\ell>0$ }\quad v_\ell(x)
    =\ell^{\frac{-m}{2}}v\left(\frac{x}{\ell}\right).
$$
Then $v_\ell\in C_c^{\infty}(\R^m)$ and 
\begin{equation}
 \label{eq:vl square integral is one}
   \int_{\R^m}|v_{\ell}|^2=1\quad\text{ for all }\ell.
\end{equation}
We shall use the notation $x=(X_1,X_2)\in \R^n,$ $X_1\in \R^m$ and $X_2\in \R^{n-m}.$ At last we define
$$
    u_{\ell}(X_1,X_2)=v_{\ell}(X_1)W(X_2),
$$
and claim that for $\ell$ big enough $u_{\ell}$ has the desired property. Without loss of generality we can assume that $\|W\|_{L^2(\omega)}=1.$ Therefore, using \eqref{eq:vl square integral is one} we get that
$$
    \|u_{\ell}\|_{L^2(\Omega_{\infty})}^2=
    \int_{\R^m}\int_{\omega}v_{\ell}^2(X_1)W^2(X_2)dX_2\,dX_1
    =\|W\|_{L^2(\omega)}=1.
$$
We therefore have to prove that, for some $\ell$ big enough
\begin{equation*}
 \label{eq:norm infty leq omeganorm plus eps}
     [u_{\ell}]^2_{s,2,\Omega_{\infty}}\leq
   [W]^2_{s,2,\omega}+\epsilon.
\end{equation*}
Using that
\begin{align*}
  |u_{\ell}(x)-u_{\ell}(y)|^2
  =&
  \left|v_{\ell}(X_1)W(X_2)-v_{\ell}(Y_1)W(X_2)+
  v_{\ell}(Y_1)W(X_2)-v_{\ell}(Y_1)W(Y_2)\right|^2
  \smallskip 
  \\
  =&v_{\ell}^2(Y_1)\left|W(X_2)-W(Y_2)\right|^2+\left|v_{\ell}(X_1)-v_{\ell}(Y_1)\right|^2
  W^2(X_2) 
  \smallskip
  \\
  & \hspace{1cm} + 2 v_{\ell}(Y_1)W(X_2)\left(W(X_2)-W(Y_2)\right)\,
  \left(v_{\ell}(X_1)-v_{\ell}(Y_1)\right),
\end{align*}
we write $[u_{\ell}]^2_{s,2,\Omega_{\infty}}$ as
$
   [u_{\ell}]_{s,2,\Omega_{\infty}}^2 :=I_1+I_2+I_3\,,
$
where
$$
   I_1=\frac{C_{n,s}}{2}
   \int_{\omegainfty}\int_{\omegainfty}\frac{|{v_\ell}(Y_1)(W(X_2)-W(Y_2))|^2}
   {|x-y|^{n+2s}}
   dx\;dy,
$$
and 
\begin{align*}
  I_2=& \frac{C_{n,s}}{2}
   \int_{\omegainfty}\int_{\omegainfty}\frac{\left|v_{\ell}(X_1)-v_{\ell}(Y_1)\right|^2
  W^2(X_2) }
   {|x-y|^{n+2s}}
   dx\;dy ,
   \smallskip \\
   I_3=& 
   C_{n,s}
   \int_{\omegainfty}\int_{\omegainfty}\frac{ v_{\ell}
   (Y_1)W(X_2)\left(W(X_2)-W(Y_2)\right)\,
  \left(v_{\ell}(X_1)-v_{\ell}(Y_1)\right)}
   {|x-y|^{n+2s}}
   dx\;dy.
\end{align*}
We will show that
\begin{equation}
 \label{I1 I2 and I3}
    I_1=[W]_{s,2,\omega}^2\quad\text{ and }\quad I_2\,,I_3\to 0
    \quad\text{ as }\ell\to \infty.
\end{equation}

\textit{Step 2 (Calculation of $I_1$):} We obtain from  Lemma \ref{reduction formula} (ii), whenever $X_2\neq Y_2$,
\begin{align*}
  \int_{\R^m}\frac{dX_1}{\left(1+\frac{|X_1-Y_1|^2}{|X_2-Y_2|^2}\right)^{\frac{n+2s}{2}}}
    =|X_2-Y_2|^m \Theta_{m,n}\hspace{3mm} \textrm{for any} \  Y_1\in \R^m.
\end{align*}
Plugging this identity into the definition of $I_1$, using Lemma \ref{reduction formula} (i) and then \eqref{eq:vl square integral is one} gives
\begin{align*}
  I_1=&
  \frac{C_{n,s}}{2}
  \int_{\omegainfty}\int_{\omegainfty}\frac{|{v_\ell}
  (Y_1)(W(X_2)-W(Y_2))|^2}{|X_2-Y_2|^{n+2s}
  \big(1+\frac{|X_1-Y_1|^2}{|X_2-Y_2|^2}\big)^{\frac{n+2s}{2}}}dxdy 
  \smallskip
  \\
    =&\frac{C_{n,s}}{2} 
	\int_{\omega}\int_{\omega}\frac{|W(X_2)-W(Y_2)|^2}{|X_2-Y_2|^{n+2s}}\int_{\re^m}   
	 \left(\int_{\re^m}\frac{dX_1}{\big(1+\frac{|X_1-Y_1|^2}{| 
	X_2-Y_2|^2}\big)^{\frac{n+2s}{2}}}\right)|v_\ell(Y_1)|^2dY_1dX_2dY_2 
	\smallskip
	\\
	=&
	\frac{C_{n,s}}{2}\Theta_{m,n}\int_{\omega}\int_{\omega}
	\frac{\left|W(X_2)-W(Y_2)\right|^2}{\left|X_2-Y_2\right|^{n-m+2s}}dX_2dY_2
	\int_{\R^m}|v_{\ell}(Y_1)|^2dY_1
	\smallskip
	\\
	=&
	\frac{C_{n-m,s}}{2}\int_{\omega}\int_{\omega}
	\frac{\left|W(X_2)-W(Y_2)\right|^2}{\left|X_2-Y_2\right|^{n-m+2s}}dX_2dY_2
	=[W]_{s,2,\omega}^2\,.
\end{align*}
This proves the first statement of \eqref{I1 I2 and I3}.
\smallskip

\textit{Step 3 (Estimates for $I_2$ and $I_3$):} We write $I_2$ as
\begin{align*}
   I_2&=\frac{C_{n,s}}{2}\int_{\omegainfty}\int_{\omegainfty}\frac{|({v_\ell}(X_1)-{v_\ell}(Y_1))W(X_2)|^2}{|X_1-Y_1|^{n+2s}\bigg(1+\frac{|X_2-Y_2|^2}{|X_1-Y_1|^2}\bigg)^{\frac{n+2s}{2}}}dxdy 
	\smallskip 
	\\
		&=\frac{C_{n,s}}{2}
		\int_{\re^m}\int_{\re^m}\frac{|{v_\ell}(X_1)-{v_\ell}(Y_1)|^2}
		{|X_1-Y_1|^{n+2s}}\int_{\omega} 
	\Bigg(\int_{\omega}\frac{dY_2}
	{\big(1+\frac{|X_2-Y_2|^2}{|X_1-Y_1|^2}\big)^{\frac{n+2s}{2}}}\Bigg)
	|W(X_2)|^2\;dX_2dX_1dY_1\,.
\end{align*}
Using Lemma \ref{reduction formula} (ii) we get
\begin{align*}
   \int_{\omega}\frac{dY_2}
	{\big(1+\frac{|X_2-Y_2|^2}{|X_1-Y_1|^2}\big)^{\frac{n+2s}{2}}}
	\leq &
	\int_{\R^{n-m}}\frac{dY_2}
	{\big(1+\frac{|X_2-Y_2|^2}{|X_1-Y_1|^2}\big)^{\frac{n+2s}{2}}}
	=|X_1-Y_1|^{n-m}\Theta_{n-m,n}.
\end{align*}
Plugging this into the definition of $I_2$ and using once more that $\|W\|_{L^2}=1$ gives
$$
    I_2 \leq \frac{C_{n,s}\;\Theta_{n-m,n}}{2}\int_{\R^m}\int_{\R^m}\frac{\left|v_{\ell}(X_1)-v_{\ell}(Y_1)\right|^2}
    {|X_1-Y_1|^{m+2s}} dX_1dY_1=[v_{\ell}]_{s,2,\R^m}^2.
$$
By Proposition \ref{prop:elementary properties} (ii) and definition of $v_{\ell}$
$$
    [v_{\ell}]_{s,2,\R^m}^2=\frac{\ell^{m-2s}}{\ell^m}[v]_{s,2,\R^m}^2=
    \frac{1}{\ell^{2s}}[v]_{s,2,\R^m}^2\quad\Rightarrow\quad I_2
    \leq \frac{[v]_{s,2,\R^m}^2}{\ell^{2s}},
$$
which proves \eqref{I1 I2 and I3}  for $I_2$.
For $I_3$ we use H\"older inequality and estimate it as
\begin{align*}
   I_3\leq &
   2 \sqrt{I_1}\sqrt{I_2}\leq 2 [W]_{s,2,\omega} \frac{[v]_{s,2,\R^m}}{\ell^s}.
\end{align*}
This shows \eqref{I1 I2 and I3}  also for $I_3$.
\end{proof}
 \smallskip

For the proofs of Theorem \ref {Poincare 1} and \ref {Poincare gen domain 1}
we will use the following lemma, which follows from an appropriate integration in spherical coordinates and an application of the change of variables formula.

\begin{lemma}[Loss and Sloane \cite{loss}, Lemma 2.4]
\label{lemma:Loss Sloan}
Let $p>0,$ $0<s<1$ and $\Omega\subset\R^n$ be a measurable set. Then for any $u\in C_c^{\infty}(\Omega)$
\begin{align*}
     &2\int_{\Omega}dx\int_{\Omega}dy
     \frac{|u(x)-u(y)|^p}{|x-y|^{n+sp}}
     \smallskip
     \\
    =&\int_{\sph^{n-1}}d\mathcal{H}^{n-1}(w)
    \int_{\{x:\;x\cdot w=0\}}d\mathcal{H}^{n-1}(x)
    \int_{\{\ell:\,x+\ell w\in\Omega\}}
    \int_{\{t:\,x+tw\in\Omega\}}\frac{|u(x+\ell w)-u(x+tw)|^p}{|\ell-t|^{1+sp}}dtd\ell.
\end{align*}
\end{lemma}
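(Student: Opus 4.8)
The plan is to reparametrize pairs $(x,y)\in\Omega\times\Omega$ by the line they span --- a direction $w\in\sph^{n-1}$, the foot of the perpendicular from the origin to that line, and two signed scalar coordinates along it --- so that the double integral on the left becomes a fourfold integral over $\sph^{n-1}$, over $w^{\perp}$, and over two copies of the one-dimensional slice. Since the integrand is nonnegative throughout, every interchange of integrals below is justified by Tonelli's theorem and the claimed identity holds in $[0,\infty]$; no a priori finiteness is needed (and, if one prefers to work with finite quantities, integrability near the diagonal follows from $|u(x)-u(y)|^{p}\le C|x-y|^{p}$ together with $p(1-s)>0$).

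First I would introduce polar coordinates in the second variable. Fixing $x$ and writing $y=x+\rho w$ with $\rho>0$ and $w\in\sph^{n-1}$, one has $dy=\rho^{n-1}\,d\rho\,d\mathcal{H}^{n-1}(w)$ and $|x-y|^{n+sp}=\rho^{n+sp}$, so that, after pulling the $w$-integral out by Tonelli,
$$\int_{\Omega}dx\int_{\Omega}dy\,\frac{|u(x)-u(y)|^{p}}{|x-y|^{n+sp}}=\int_{\sph^{n-1}}d\mathcal{H}^{n-1}(w)\int_{\Omega}dx\int_{\{\rho>0:\,x+\rho w\in\Omega\}}\frac{|u(x)-u(x+\rho w)|^{p}}{\rho^{1+sp}}\,d\rho.$$

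Next, for each fixed $w$ I would slice $\R^{n}$ by the lines parallel to $w$: every point decomposes orthogonally as $x=z+tw$ with $z\in w^{\perp}:=\{z\in\R^{n}:z\cdot w=0\}$ and $t=x\cdot w$, and $(z,t)\mapsto z+tw$ is an isometry, so $dx=d\mathcal{H}^{n-1}(z)\,dt$. Applying Fubini in these coordinates and then substituting $\ell=t+\rho$ in the innermost integral (so $\rho=\ell-t>0$, $|\ell-t|=\rho$, and the constraint $x+\rho w\in\Omega$ becomes $z+\ell w\in\Omega$) turns the right-hand side above into
$$\int_{\sph^{n-1}}d\mathcal{H}^{n-1}(w)\int_{w^{\perp}}d\mathcal{H}^{n-1}(z)\int_{\{t:\,z+tw\in\Omega\}}dt\int_{\{\ell>t:\,z+\ell w\in\Omega\}}\frac{|u(z+tw)-u(z+\ell w)|^{p}}{|\ell-t|^{1+sp}}\,d\ell.$$

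Finally, writing $S:=\{t\in\R:z+tw\in\Omega\}$, the two inner integrals run over $\{(t,\ell)\in S\times S:\ell>t\}$, the integrand $F(t,\ell)=|u(z+tw)-u(z+\ell w)|^{p}|\ell-t|^{-1-sp}$ is symmetric in $(t,\ell)$, and the diagonal $\{\ell=t\}$ is Lebesgue-null; hence $\int_{S}\int_{S,\,\ell>t}F=\tfrac12\int_{S}\int_{S}F$. Multiplying the whole identity by $2$ and relabelling $z$ as $x$ yields the asserted formula. I do not anticipate a genuine obstacle: the only delicate points are keeping track of how the domain constraints ($x\in\Omega$, $y\in\Omega$, i.e. $z+tw\in\Omega$ and $z+\ell w\in\Omega$) transform under the polar substitution and under the shear $\ell=t+\rho$, and checking that the hypotheses for Tonelli and for the change-of-variables formula are in place --- all of which are routine.
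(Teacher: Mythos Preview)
Your argument is correct and is precisely the approach the paper indicates: it does not give a proof of this lemma but cites Loss--Sloane and remarks that it ``follows from an appropriate integration in spherical coordinates and an application of the change of variables formula,'' which is exactly your polar-coordinate substitution $y=x+\rho w$ followed by the orthogonal decomposition $x=z+tw$ and the shear $\ell=t+\rho$. The final symmetrization $\int_{S}\int_{S,\ell>t}F=\tfrac12\int_{S}\int_{S}F$ to pass from $\rho>0$ to the full double integral is handled correctly.
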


\begin{lemma}
\label{lemma:angle condition}
Let $\frac{1}{2}<s<1$ and $\Omega\subset\R^n$ be a measurable set. Suppose that there exists a $\mathcal{H}^{n-1}$ measurable function $f:\mathbb{S}^{n-1}\to [0,\infty)$ such that 
$$
    P^1_{1,s}\left(\Omega\cap \{x+tw:\,t\in\R\}\right)\geq f(w)
$$
for a.e. $w\in\mathbb{S}^{n-1}$ and a.e. $x\in \{y\in \R^n:\,y\cdot w=0\}.$ Then it holds that
$$
    P^1_{n,s}(\Omega)\geq \frac{C_{n,s}}{2C_{1,s}}\int_{\mathbb{S}^{n-1}}f(w)d\mathcal{H}^{n-1}(w).
$$
\end{lemma}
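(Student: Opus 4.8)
The plan is to combine the polar-coordinates identity of Lemma~\ref{lemma:Loss Sloan} (with $p=2$) with the one-dimensional Poincar\'e estimate on each line of direction $w$. First I would write, for an arbitrary $u\in C_c^{\infty}(\Omega)$,
\begin{align*}
   [u]_{s,2,\Omega}^2
   &=\frac{C_{n,s}}{2}\int_{\Omega}\int_{\Omega}\frac{|u(x)-u(y)|^2}{|x-y|^{n+2s}}\,dx\,dy\\
   &=\frac{C_{n,s}}{2C_{1,s}}\cdot\frac{C_{1,s}}{2}\int_{\sph^{n-1}}d\mathcal{H}^{n-1}(w)\int_{\{x\cdot w=0\}}d\mathcal{H}^{n-1}(x)\int\int_{\{\ell,t:\,x+\ell w,\,x+tw\in\Omega\}}\frac{|u(x+\ell w)-u(x+tw)|^2}{|\ell-t|^{1+2s}}\,dt\,d\ell.
\end{align*}
Here I used Lemma~\ref{lemma:Loss Sloan} to rewrite the double integral, inserting the factor $C_{1,s}/C_{1,s}$ so that the innermost triple integral becomes exactly $[u_{x,w}]_{s,2,A_{x,w}}^2$, where $u_{x,w}(t):=u(x+tw)$ and $A_{x,w}=\{t\in\R:\,x+tw\in\Omega\}$ is the one-dimensional slice.

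Next, for fixed $w$ and fixed $x$ with $x\cdot w=0$, the function $u_{x,w}$ lies in $C_c^{\infty}(A_{x,w})$, so by the very definition of $P^1_{1,s}$ applied to the open set $A_{x,w}\subset\R$ and the hypothesis $P^1_{1,s}(A_{x,w})\geq f(w)$, we get the pointwise-in-$(x,w)$ bound
$$
   \int\int_{\{\ell,t:\,x+\ell w,\,x+tw\in\Omega\}}\frac{|u(x+\ell w)-u(x+tw)|^2}{|\ell-t|^{1+2s}}\,dt\,d\ell\ \geq\ \frac{2}{C_{1,s}}\,f(w)\int_{A_{x,w}}u_{x,w}^2(t)\,dt.
$$
Substituting this into the previous display and then recombining the resulting integral over $w$, $x$, $t$ back into a single integral over $\R^n$ — this is just Fubini together with the fact that $\{(x,w,t):\,x\cdot w=0\}$ parametrizes $\R^n\setminus\{0\}$ up to the $\ell\leftrightarrow t$ doubling already accounted for — yields
$$
   [u]_{s,2,\Omega}^2\ \geq\ \frac{C_{n,s}}{2C_{1,s}}\left(\int_{\sph^{n-1}}f(w)\,d\mathcal{H}^{n-1}(w)\right)\int_{\Omega}u^2\,dx.
$$
Dividing by $\int_\Omega u^2$ and taking the infimum over $u\in C_c^{\infty}(\Omega)$, hence over $u\in W^{s,2}_0(\Omega)$ by density, gives the claimed lower bound on $P^1_{n,s}(\Omega)$.

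The main technical point to be careful about is the bookkeeping in the last recombination step: Lemma~\ref{lemma:Loss Sloan} already contains a factor $2$ on the left corresponding to the symmetry of the double integral, and one must check that integrating $f(w)\int_{A_{x,w}}u_{x,w}^2\,dt$ over $w\in\sph^{n-1}$ and $x\in w^{\perp}$ reproduces $\big(\int_{\sph^{n-1}}f\big)\int_\Omega u^2$ without spurious constants — this follows because for each fixed $w$ the map $(x,t)\mapsto x+tw$ with $x\in w^{\perp}$ is measure preserving onto $\R^n$, so $\int_{w^{\perp}}\int_{A_{x,w}}u_{x,w}^2\,dt\,d\mathcal{H}^{n-1}(x)=\int_{\R^n}u^2=\int_\Omega u^2$, and the $w$-integral then only sees $f(w)$. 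A secondary point is the measurability of $w\mapsto P^1_{1,s}(A_{x,w})$ and of $f$, which is granted by hypothesis, and the fact that the estimate is only assumed for a.e.\ $(x,w)$, which is harmless since the outer integrals are over $\mathcal{H}^{n-1}$-measurable sets. No compactness or minimizer argument is needed; the whole proof is a direct inequality, which is why it is stated as a lemma rather than a theorem.
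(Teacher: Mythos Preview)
Your proof is correct and follows essentially the same route as the paper: apply the Loss--Sloane identity (Lemma~\ref{lemma:Loss Sloan}) to slice the Gagliardo seminorm along lines, use the one-dimensional Poincar\'e bound $P^1_{1,s}(A_{x,w})\geq f(w)$ on each slice, and recombine via Fubini to recover $\int_\Omega u^2$ times $\int_{\sph^{n-1}}f$. The paper's argument is identical in structure, only more terse on the bookkeeping you spell out.
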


\begin{proof}
Let $w\in\mathbb{S}^{n-1}$ and $x\in L_w:= \{y\in \R^n:\,y\cdot w=0\}.$ Define
$
   \Omega_{w,x}:=\Omega\cap \{x+t w:\,t\in\R\}.
$
Then by  hypothesis
\begin{align*}
    \frac{C_{1,s}}{2}\int_{\{\ell:\,x+\ell w\in\Omega\}}
    \int_{\{t:\,x+tw\in\Omega\}}\frac{|u(x+\ell w)-u(x+tw)|^2}{|\ell-t|^{1+2s}}dtd\ell
    \geq 
    &P^1_{1,s}(\Omega_{w,x})\int_{\Omega_{w,x}}|u(x+tw)|^2dt
    \smallskip
    \\
    \geq & f(w) \int_{\Omega_{w,x}}|u(x+tw)|^2dt.
\end{align*}
Note that for any $w\in \mathbb{S}^{n-1}$, by Fubini (or change of variables)
$$
    \int_{L_w}d\mathcal{H}^{n-1}(x)\int_{\Omega_{x,w}}|u(x+tw)|^2dt=
    \int_{\Omega}|u|^2.
$$
Therefore it follows from Lemma \ref {lemma:Loss Sloan} that
$$
    [u]_{W^{s,2}(\Omega)}^2\geq \frac{C_{n,s}}{2C_{1,s}}
   \left( \int_{\mathbb{S}^{n-1}}f(w)d\mathcal{H}^{n-1}(w)\right)\int_{\Omega}|u|^2,
$$
which proves the lemma.
\end{proof}

We will use the explicit form of the following hyperspherical coordinates and their properties. Let us define $Q_{n-1}\subset\re^{n-1}$ by
$$
  Q_{n-1}=(0,\pi)^{n-2}\times (0,2\pi).
$$
The hyper spherical coordinates  $H=(H_1,\ldots,H_n):Q_{n-1}\to \mathbb{S}^{n-1}$ are defined as: for $k=1,\ldots,n$ and $\varphi=(\varphi_1,\ldots,\varphi_{n-1})$
$$
  H_k(\varphi)=\cos\varphi_k\prod_{l=0}^{k-1}\sin\varphi_l\quad\text{ with convention }\varphi_0=\frac{\pi}{2},\quad\varphi_n=0.
$$
A calculation shows that
$
  d_i(\varphi):=\left\langle \frac{\partial H}{\partial \varphi_i},\frac{\partial H}{\partial \varphi_i}\right\rangle =\prod_{l=0}^{i-1}\sin^2\varphi_l>0.
$
One verifies that the metric tensor  in these coordinates is diagonal
$
  g_{ij}(\varphi)=\left\langle\frac{\partial H}{\partial \varphi_i},\frac{\partial H}{\partial \varphi_j}\right\rangle=\delta_{ij}d_i(\varphi),
$
($\delta_{ij}=1$ if $i=j$ and $0$ else)
and hence the surface element $g_{n-1}$ is given by
$$
  g_{n-1}(\varphi)=\sqrt{\det g_{ij}(\varphi)}=\prod_{k=1}^{n-1}d_k(\varphi)= \prod_{k=1}^{n-2}(\sin\varphi_k)^{n-k-1}.
$$
Note that for any function $f$ depending only of $\varphi_1$ we have that
\begin{align*}
    \int_{Q_{n-1}}f(\varphi_1)g_{n-1}(\varphi)d\varphi=
    &\int_0^{\pi}
    f(\varphi_1)(\sin\varphi_1)^{n-2}\left(\int_{Q_{n-2}} 
    (\sin\varphi_2)^{n-3}\cdots\sin\varphi_{n-2}d\varphi_2\cdots d\varphi_{n-2}\right)
    d\varphi_1
    \smallskip
    \\
    =&\int_0^{\pi} f(\varphi_1)(\sin\varphi_1)^{n-2}\left(\int_{Q_{n-2}} 
   g_{n-2}(\theta)d\theta\right)
    d\varphi_1
    \smallskip
    \\
    =&\mathcal{H}^{n-2}(\mathbb{S}^{n-2})
    \int_0^{\pi} f(\varphi_1)(\sin\varphi_1)^{n-2}d\varphi_1.
\end{align*}
In particular for $f(\varphi)=|\cos\varphi_1|^{2s}$ we obtain, using \eqref{eq:properties of Beta},  that
\begin{equation}
 \label{eq:cos varphi 1 integral}
  \begin{split}
       \int_{Q_{n-1}}|\cos\varphi_1|^{2s}g_{n-1}(\varphi)d\varphi=&
       2\mathcal{H}^{n-2}(\mathbb{S}^{n-2})
    \int_0^{\frac{\pi}{2}} (\cos\varphi_1)^{2s}(\sin\varphi_1)^{n-2}d\varphi_1
    \smallskip
    \\
    =&\frac{2\pi^{\frac{n-1}{2}}}{\Gamma\left(\frac{n-1}{2}\right)}
    B\left(\frac{n-1}{2},\frac{2s+1}{2}\right).  
   \end{split}  
\end{equation}

\begin{proof}[\textbf{Proof of Theorem \ref{Poincare 1}}]
\textit{Part (1):}
We apply the above Proposition \ref{useful prop} by choosing $m=n-1$ and $\omega=(-1,1) \subset \re$. In particular we have 
$
P^1_{n,s}(\omegainfty)\leq P^1_{1,s}((-1,1)).$
 Now use that $P^1_{1,s}((-1,1))=0$ for any $s\in (0, \frac{1}{2}]$ [see, Proposition \ref{prop:elementary properties} (i)]. 
\smallskip
    
\textit{Part (2):} By Proposition \ref{useful prop} we know that $P_{n,s}^1(\Omega_{\infty})\leq P_{1,s}^1((-1,1)).$ So it is sufficient to show that
\begin{equation}
  \label{eq:P omegainfty geq P crosssec}
    P_{n,s}^1(\Omega_{\infty})\geq P_{1,s}^1((-1,1)).
\end{equation}
We will deduce this inequality from Lemma \eqref{lemma:angle condition}. Let $w=(w_1,\ldots,w_n)\in \mathbb{S}^{n-1}$ be such $w_1\neq 0.$ By the special form of $\Omega_{\infty}=(-1,1)\times\R^{n-1}$ we have that  the length of the intersection $\Omega_{\infty}\cap \{x+tw:\,t\in\R\}$ does not depend on $x.$ So we obtain that
\begin{align*}
   \mathcal{H}^1\left(\Omega_{\infty}\cap \{x+tw:\,t\in\R\}\right)
   =&
     \mathcal{H}^1\left(\Omega_{\infty}\cap \{(-1,0,\ldots,0)+tw:\,t\in\R\}\right)
     \smallskip
     \\
     =&| t_0(w)|\quad\text{ where } -1+t_0(w)w_1=1\quad\Rightarrow\quad 
     t_0(w)=\frac{2}{w_1}.
\end{align*}
From Proposition \ref{prop:elementary properties} (ii) we obtain that
$$
    P_{1,s}^1\left(\Omega_{\infty}\cap \{x+tw:\,t\in\R\}\right)
    =\left(\frac{|w_1|}{2}\right)^{2s}P_{1,s}^1((0,1))
    =|w_1|^{2s}P_{1,s}^1((-1,1)).
$$
Using Lemma \ref{lemma:angle condition}, the hyperspherical coordinates and \eqref{eq:cos varphi 1 integral} gives
\begin{align*}
   P_{n,s}^1(\omegainfty)
   \geq &
   P_{1,s}^1((-1,1))\frac{C_{n,s}}{2 C_{1,s}}\int_{\mathbb{S}^{n-1}}
   |w_1|^{2s}d\mathcal{H}^{n-1}\\
   =&
    P_{1,s}^1((-1,1))\frac{C_{n,s}}{2 C_{1,s}}\int_{Q_{n-1}}
   |\cos\varphi_1|^{2s}g_{n-1}(\varphi)d\varphi
   \smallskip
   \\
   =&   P_{1,s}^1((-1,1))\frac{C_{n,s}}{2 C_{1,s}}
   \frac{2\pi^{\frac{n-1}{2}}}{\Gamma\left(\frac{n-1}{2}\right)}
    B\left(\frac{n-1}{2},\frac{2s+1}{2}\right).
\end{align*}
It is immediate to verify, using \eqref{eq:properties of Beta}, that
$$
   \frac{C_{n,s}}{ C_{1,s}}
   \frac{\pi^{\frac{n-1}{2}}}{\Gamma\left(\frac{n-1}{2}\right)}
    B\left(\frac{n-1}{2},\frac{2s+1}{2}\right)=1,
$$
which concludes the proof of  \eqref{eq:P omegainfty geq P crosssec}.  
\end{proof}

\smallskip

\begin{proof}[\textbf{Proof of Theorem \ref{Poincare gen domain 1}}]
By hypothesis, Remark \ref {remark:finite ball cond in 1 dim}, and \eqref{eq:P11 estimated by m} we obtain that
$$
    P^1_{1,s}(\Omega\cap \{x+tw:\,t\in\R\})\geq \frac{P_{1,s}^1((0,1))}{m^{2s}}
    \quad\text{ for all }w\in \Sigma,\; x\in\R^n.
$$
Thus if we define
$$
    f(w)=\left\{\begin{array}{rl}
             \frac{P_{1,s}^1((0,1))}{m^{2s}} & \text{ if } w\in\Sigma
             \smallskip \\
             0 & \text{ if }w\in \mathbb{S}^{n-1}\setminus\Sigma
          \end{array}\right.,
$$
then $f$ satisfies the hypothesis of Lemma \ref{lemma:angle condition} and we get that
$$
   P_{n,s}^1(\Omega)\geq \frac{C_{n,s}}{2C_{1,s}}\mathcal{H}^{n-1}
   (\Sigma)\frac{P_{1,s}^1((0,1))}{m^{2s}}.
$$
This completes the proof of the theorem.\end{proof}

We will provide some examples of the domains which satisfy the hypothesis of Theorem \ref{Poincare gen domain 1} and one which does not.

\begin{example}
\label{exmpl:domains with sufficient condn}
(i) \textit{Domain between Graphs:}
Let $f_1,f_2:\mathbb R^{n-1}\to[m,M]$ be two bounded continuous function such that $f_1 < f_2$. $\Omega $ is defined as  $$\Omega=\{(x,y)\in\mathbb R^n:f_1(x)<y< f_2(x)\}.$$

\smallskip

(ii) \textit{Finite union of strips:} The domain $\Omega$ is the finite union of the strips in $\rn.$ Let $O(n)$ denote the set on $n\times n$ orthogonal(rotation) matrices.   Given $A_i\in O(n)$, $z_i \in \re^n $, $b_i\in \R$ for $i=1,\ldots,M$ and $\omegainfty=(-1,1)\times \R^{n-1}$, then define
$$
   \Omega=\bigcup_{i=1}^M \left( b_i A_i(\omegainfty)+ z_i \right).
$$

\smallskip

(iii) \textit{Infinitely many parallel strips:}
$\Omega=\left(\bigcup_{i=1}^{\infty}I_i\right)\times\R^{n-1}\subset\R^n,$ where $I_i$ are disjoint open intervals and length $(I_i)\leq m$ for some uniform upper bound $m.$
\smallskip

(iv)  \textit{Infinite ``L" type domain:} $$\Omega =  \left( (0,1)\times (0,\infty)\right) \bigcup \left( (0,\infty) \times (0,1) \right)\subset\mathbb{R}^2.$$

(v) \textit{Concentric Annulus:} The following domain satisfies the finite ball condition but does not satisfy the hypothesis of Theorem \ref{Poincare gen domain 1}:
$$
   \Omega=\bigcup_{k=1}^{\infty}B_{2k}(0)\setminus B_{2k-1}(0).
$$
\end{example}

\section{Proof of Theorem \ref{best constant}}

The main tool  to prove  Theorem \ref{best constant} is to use discrete version of fractional Picone identity.  There are various version of Picone identity, we refer to [\cite{crs}, equation no 6.12]  for the version which is particularly helpful  for problems  involving second order elliptic operator. For study of fractional eigenvalue problem we refer to \cite{raffela}.

\begin{lemma}[Discrete Picone inequality]
Let $u,\;v$ be two measurable functions with $u>0$ and $v\geq 0$. Then
$$\big(u(x)-u(y)\big)\bigg[\frac{v^2(x)}{u(x)}-\frac{v^2(y)}{u(y)}\bigg]\leq|v(x)-v(y)|^2.$$
\end{lemma}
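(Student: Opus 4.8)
The inequality is purely algebraic and involves only the two pairs of values $a := u(x) > 0$, $b := u(y) > 0$, $c := v(x) \geq 0$, $d := v(y) \geq 0$, so the plan is to reduce it to a two-variable statement and verify it directly. The claim becomes
$$
  (a-b)\left(\frac{c^2}{a} - \frac{d^2}{b}\right) \leq (c-d)^2.
$$
First I would multiply through by $ab > 0$, which preserves the inequality, turning the left-hand side into $(a-b)(bc^2 - ad^2)$ and reducing the claim to
$$
  (a-b)(bc^2 - ad^2) \leq ab(c-d)^2.
$$
Expanding both sides, the left-hand side is $ab c^2 - a^2 d^2 - b^2 c^2 + ab d^2$ and the right-hand side is $ab c^2 - 2ab cd + ab d^2$; after cancelling the common terms $abc^2$ and $abd^2$, the inequality is equivalent to
$$
  -a^2 d^2 - b^2 c^2 \leq -2ab cd, \qquad\text{i.e.}\qquad 2ab cd \leq a^2 d^2 + b^2 c^2,
$$
which is just the AM--GM inequality $2(ad)(bc) \leq (ad)^2 + (bc)^2$, equivalently $(ad - bc)^2 \geq 0$. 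Tracing the equivalences back, this proves the claim, with equality precisely when $ad = bc$, i.e. $u(x) v(y) = u(y) v(x)$.

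The only point requiring a word of care is the degenerate case: if either $v(x) = 0$ or $v(y) = 0$ the manipulations above still go through verbatim since we only ever divided by $u(x), u(y)$ and never by $v$; and the hypothesis $u > 0$ (strict) is exactly what is needed to justify multiplying by $ab$ and to keep the quotients $v^2/u$ well-defined. I do not anticipate any real obstacle here — the content is entirely the single application of $(ad-bc)^2 \geq 0$, and the rest is bookkeeping to get from the stated form to that square. I would write the proof in the compressed form: clear denominators, expand, cancel, and observe the result is $(u(x)v(y) - u(y)v(x))^2 \geq 0$.
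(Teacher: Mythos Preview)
Your proof is correct and essentially identical to the paper's: the paper simply says to expand the left-hand side and apply Young's inequality, which is exactly your reduction to $2(ad)(bc)\leq (ad)^2+(bc)^2$, i.e.\ $(ad-bc)^2\geq 0$.
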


Expanding the left side of this inequality and by using Young's inequality we get the desired result. For further generalization,  we refer to the Proposition 4.2 in \cite{picone}. We will use the following abbreviation
$  
    x=(X_1,X_2)\in\R^n,  X_1\in \R^m,  X_2\in\R^{n-m}.
$
$\Omega_{\infty}=\R^m\times\omega,$ $\omega\subset\R^{n-m}$ shall always denote the sets defined in Theorem \ref{best constant}. We assume that $W$ is the first Eigenfunction of the fractional Laplace operator in $\omega.$ Thus (see \cite{strict positive})  $W$ is continuous in $\overline{\omega},$ smooth in the interior of $\omega,$  strictly positive in $\omega$ and satisfies for some $P_{n-m,s}^2(\omega)>0$
$$
    \left\{
    \begin{array}{rl}
    (-\Delta_{n-m})^sW=P_{n-m,s}^2(\omega)W& \quad\text{ in }\omega,
    \\
    W=0& \quad \text{ in }\R^{n-m}\setminus \omega,
    \\
    W>0& \quad\text{ in }\omega.
    \end{array}\right.
$$

\begin{lemma}\label{entire strip statisfy}
Let $x=(X_1,X_2)\in\omegainfty$ and define $u^*(x):=W(X_2).$ Then $(-\Delta_n)^su^*=P^2_{n-m,s}(\omega)\;u^*$ in $\omegainfty.$
\end{lemma}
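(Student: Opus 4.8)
The plan is to compute $(-\Delta_n)^s u^*$ directly from the singular integral representation of the fractional Laplacian and reduce the $n$-dimensional principal value to the $(n-m)$-dimensional one using the reduction formula in Lemma \ref{reduction formula}. First I would fix $x=(X_1,X_2)\in\omegainfty$ and write, using the pointwise definition of the operator,
$$
   (-\Delta_n)^s u^*(x)=C_{n,s}\;\mathrm{P.V.}\int_{\R^n}\frac{u^*(x)-u^*(y)}{|x-y|^{n+2s}}\,dy
   =C_{n,s}\;\mathrm{P.V.}\int_{\R^{n-m}}\big(W(X_2)-W(Y_2)\big)\left(\int_{\R^m}\frac{dY_1}{|x-y|^{n+2s}}\right)dY_2,
$$
where $y=(Y_1,Y_2)$ and I have used that $u^*$ does not depend on the first $m$ coordinates, so the numerator is independent of $Y_1$; the inner integral over $Y_1\in\R^m$ is absolutely convergent once $Y_2\neq X_2$, which legitimizes pulling it inside.

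Next I would evaluate the inner integral. Writing $|x-y|^2=|X_1-Y_1|^2+|X_2-Y_2|^2$ and substituting to factor out $|X_2-Y_2|$, one gets
$$
   \int_{\R^m}\frac{dY_1}{\big(|X_1-Y_1|^2+|X_2-Y_2|^2\big)^{\frac{n+2s}{2}}}
   =\frac{1}{|X_2-Y_2|^{n+2s-m}}\int_{\R^m}\frac{dZ}{\big(1+\frac{|Z|^2}{|X_2-Y_2|^2}\big)^{\frac{n+2s}{2}}}\cdot\frac{1}{|X_2-Y_2|^{0}},
$$
which by Lemma \ref{reduction formula}(ii) (with $a=|X_2-Y_2|$, $z=0$, and matching the exponent, noting $\frac{n+2s}{2}$ plays the role there) equals $|X_2-Y_2|^{m}\,\Theta_{m,n}\big/|X_2-Y_2|^{n+2s}=\Theta_{m,n}\,|X_2-Y_2|^{m-n-2s}$. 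Substituting back,
$$
   (-\Delta_n)^s u^*(x)=C_{n,s}\,\Theta_{m,n}\;\mathrm{P.V.}\int_{\R^{n-m}}\frac{W(X_2)-W(Y_2)}{|X_2-Y_2|^{(n-m)+2s}}\,dY_2
   =C_{n-m,s}\;\mathrm{P.V.}\int_{\R^{n-m}}\frac{W(X_2)-W(Y_2)}{|X_2-Y_2|^{(n-m)+2s}}\,dY_2,
$$
using Lemma \ref{reduction formula}(i), $C_{n,s}\Theta_{m,n}=C_{n-m,s}$. The right-hand side is exactly $(-\Delta_{n-m})^s W(X_2)$, which equals $P^2_{n-m,s}(\omega)\,W(X_2)=P^2_{n-m,s}(\omega)\,u^*(x)$ by the eigenvalue equation satisfied by $W$.

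The main obstacle is the careful justification of interchanging the principal value with the inner $Y_1$-integration: the $n$-dimensional principal value is defined by excising a ball $B_\varepsilon(x)$ in $\R^n$, whereas after integrating in $Y_1$ one wants a principal value excising a ball in $\R^{n-m}$ around $X_2$. I would handle this by the standard device of symmetrizing the integrand — writing $W(X_2)-W(Y_2)=\tfrac12\big(2W(X_2)-W(Y_2)-W(2X_2-Y_2)\big)$ after reflecting $Y_2\mapsto 2X_2-Y_2$, which makes the integrand integrable near $Y_2=X_2$ (using $W\in C^2$ near $X_2$ since $W$ is smooth in the interior of $\omega$, and decay/boundedness of $W$ at infinity since $W$ vanishes outside $\omega$) and removes the need for a principal value at all; Fubini then applies directly. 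One also checks the tail is fine because $W$ has compact support in $X_2$. This is the only genuinely technical point; the algebra is immediate from Lemma \ref{reduction formula}. I would remark that $u^*$ is continuous on $\overline{\omegainfty}$, smooth in its interior, and vanishes on $\R^n\setminus\omegainfty$, so all the pointwise manipulations are valid and the statement ``$(-\Delta_n)^su^*=P^2_{n-m,s}(\omega)u^*$ in $\omegainfty$'' is meaningful pointwise.
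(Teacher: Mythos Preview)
Your proof is correct and follows essentially the same route as the paper: integrate out the $Y_1$-variable via Lemma~\ref{reduction formula}(ii), convert constants via Lemma~\ref{reduction formula}(i), and identify the result as $(-\Delta_{n-m})^sW(X_2)$. Your added justification for interchanging the principal value with the $Y_1$-integration (via second-difference symmetrization) is a point the paper simply glosses over; note only that your intermediate display for the inner integral is a bit garbled (the factors $|X_2-Y_2|^{-(n+2s-m)}$ and $|X_2-Y_2|^{0}$ do not match the substitution you describe), though your final value $\Theta_{m,n}\,|X_2-Y_2|^{m-n-2s}$ is correct.
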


\begin{proof}
Using first Lemma \ref{reduction formula} (ii) and then (i) we get 
\begin{align*}
  (-\Delta_n)^su^*(x)
  =&C_{n,s}\int_{\rn}\frac{u^*(x)-u^*(y)}{|x-y|^{n+2s}}dy 
  =C_{n,s}\int_{\rn}\frac{W(X_2)-W(Y_2)}{|x-y|^{n+2s}}dy\\
  =&C_{n,s}\int_{\re^{n-m}}dY_2\frac{W(X_2)-W(Y_2)}{|X_2-Y_2|^{n+2s}}
  \int_{\re^m}\frac{dY_1}{\bigg(1+\frac{|X_1-Y_1|^2}{|X_2-Y_2|^2}\bigg)^{\frac{n+2s}{2}}}
   \smallskip
   \\
   =&
   C_{n,s}\Theta_{m,n}\int_{\R^{n-m}}\frac{W(X_2)-W(Y_2)}{|X_2-Y_2|^{n-m+2s}}
   dY_2
   =\left(-\Delta_{n-m}\right)^sW(X_2)
   \smallskip
   \\
   =& P^2_{n-m,s}(\omega)W(X_2)
   =P^2_{n-m,s}(\omega)\;u^*(x).
  \end{align*}
\end{proof}

\begin{proof}[\textbf{Proof of the Theorem \ref{best constant}}.]

Since $u^*(x):=W(X_2) \in C^\infty(\omegainfty)$ is strictly positive in $\omegainfty$. We have  for any $v\in C_c^\infty(\omegainfty)$ the function $\phi=\frac{v^2}{u^*}$ belongs to  $\phi\in C_c^\infty(\omegainfty)$. 
Then by Discrete Picone inequality we have 
$$
    \big((u^*(x)-u^*(y)\big)\big(\phi(x)-\phi(y)\big)=\big(u^*(x)-u^*(y)\big)
    \bigg[\frac{v^2(x)}{u^*(x)}-\frac{v^2(y)}{u^*(y)}\bigg]\leq|v(x)-v(y)|^2.
$$
Integrating two times over $\rn$ we obtain
$$
   \frac{C_{n,s}}{2}\;\int_{\rn}\int_{\rn}\frac{\big((u^*(x)-u^*(y)\big)
   \big(\phi(x)-\phi(y)\big)}{|x-y|^{n+2s}}\;dxdy\leq\frac{C_{n,s}}{2}
   \int_{\rn}\int_{\rn}\frac{|v(x)-v(y)|^2}{|x-y|^{n+2s}}\;dxdy.
$$
Writing the left hand side as sum of two integral, one containing $\phi(x)$ and the other $\phi(y)$ and making a change of variables in the second $x\mapsto y$ gives
$$
    C_{n,s}\int_{\rn}\int_{\rn}\frac{\big((u^*(x)-u^*(y)\big)\;\phi(x)}{|x-y|^{n+2s}}\;dxdy
    \leq  \frac{C_{n,s}}{2}\int_{\rn}\int_{\rn}\frac{|v(x)-v(y)|^2}{|x-y|^{n+2s}}\;dxdy.
$$
It follows that
$$
   C_{n,s}\int_{\omegainfty}\frac{v^2(x)}{u^*(x)}\int_{\rn}\frac{u^*(x)-u^*(y)}{|x-y|^{n+2s}}\;dydx\leq\frac{C_{n,s}}{2}\int_{\rn}\int_{\rn}\frac{|v(x)-v(y)|^2}{|x-y|^{n+2s}}\;dxdy.
$$
As this is true for any $v\in C_c^\infty(\omegainfty)$ and using Lemma \ref{entire strip statisfy}, we get that
$
    P^2_{n-m,s}(\omega)\leq P^2_{n,s}(\omegainfty).
$
For the reverse estimate one can  proceed exactly in the same way as in Proposition \ref{useful prop}.
\end{proof} 
\section{Sufficient conditions for $s\in (0,\frac{1}{2})$} \label{sec:sufficent_cndn_unbd_dom_0}

\subsection{Proof of Theorem \ref{theorem:distance condition} and related discussions}

In this section we  will denote
$\quad x'\in \R^{n-1}, \quad \{x_1=R\}=\{(x_1,x')\in\R^n:\, x_1=R\},
$
and similar notations for $x_1>R$. Theorem \ref{theorem:distance condition} is having a general and relatively abstract condition on domains. We discuss various examples satisfying the condition later. First, we prove this main theorem.

\begin{proof}[\textbf{Proof of Theorem \ref{theorem:distance condition}}]
Define $\Omega_k=\Omega\cap \lambda_k U$. Let $\epsilon>0$ be given. By hypothesis there exists $k\in\mathbb{N}$ such that
\begin{equation}
   \label{eq:hypothesis Omega k}
     \frac{1}{\mathcal{L}^n(\Omega_k)}\int_{\Omega_k}\frac{dx}{\dist(x,
     (\lambda_kU)^c)^{2s}}\leq \epsilon.
\end{equation}
By hypothesis $\Omega_k$ is bounded. $\Omega_k$ might not be Lipschitz, but $\mathcal{H}^{n-1}(\partial\Omega_k)\leq \mathcal{H}^{n-1}(\partial\Omega \, \cap \, \lambda_k \bar{U})+C\lambda_k^{n-1} <\infty.$ Therefore by Proposition \ref{prop:elementary properties} (i) and Remark \ref {remark:bounded perimeter} there exists a $v_k\in C_c^{\infty}(\Omega_k)$ such that
\begin{equation}
 \label{eq:vk in bounded Omega k, for thm 1.3}
    \frac{1}{\|v_k\|_{L^2(\Omega_k)}^2}\int_{\Omega_k}\int_{\Omega_k}
    \frac{|v_k(x)-v_k(y)|^2}{|x-y|^{n+2s}}dx\,dy\leq \epsilon.
\end{equation}
Recall, by the proof of Proposition  \ref{prop:elementary properties} (i), that $v_k$ is an approximation of the characteristic function of $\Omega_k$ by cutting it off near the boundary. Thus we can also assume that
\begin{equation*}\label{eq:4.2.b}
    |v_k|\leq 1\quad\text{  and }
    \quad \|v_k\|_{L^2(\Omega_k)}^2\geq \frac{\mathcal{L}^n(\Omega_k)}{2}.
\end{equation*}
Finally define $u_k:=v_k$ in $\Omega_k$ and $u_k:=0$ in $\Omega\setminus\Omega_k$. Then $u_k\in C_c^{\infty}(\Omega)$ and
\begin{equation}
  \label{eq:4.2.c}
     \|u_k\|_{L^2(\Omega_k)}^2= \|v_k\|_{L^2(\Omega_k)}^2
     \geq  \frac{\mathcal{L}^n(\Omega_k)}{2} .
\end{equation}
Therefore, we get that 
$$
     \int_{\Omega_k}dx\int_{\Omega\setminus \Omega_k}dy
    \frac{|u_k(x)-u_k(y)|^2}{|x-y|^{n+2s}}
    \leq 
    \int_{\Omega_k}dx\int_{\Omega\setminus \Omega_k}dy
    \frac{1}{|x-y|^{n+2s}}.
$$
Now use that for $x\in\Omega_k$
\begin{align*}
   \int_{\Omega\setminus \Omega_k}
    \frac{dy}{|x-y|^{n+2s}}
    \leq &
    \int_{\{y:\,|x-y|>\dist(x,(\lambda_kU)^c)\}} \frac{dy}{|x-y|^{n+2s}}
    =c(n)\int_{\dist(x,(\lambda_kU)^c)}^{\infty}\frac{r^{n-1}}{r^{n+2s}}dr
    \smallskip
    \\
    =&\frac{c(n,s)}{\dist(x,\ (\lambda_kU)^c)^{2s}}.
\end{align*}
Plugging this into the previous inequality and using 
\eqref{eq:4.2.c},  \eqref{eq:hypothesis Omega k}  gives that
\begin{equation}\label{uk bound compl omegak}
 \frac{1}{\|u_k\|_{L^2(\Omega_k)}^2}
    \int_{\Omega_k}dx\int_{\Omega\setminus \Omega_k}dy
    \frac{|u_k(x)-u_k(y)|^2}{|x-y|^{n+2s}} \\
      \leq 
     \frac{2 c(n,s)}{\mathcal{L}^n(\Omega_k)}\int_{\Omega_k}\frac{dx}
     {\dist(x,(\lambda_kU)^c)^{2s}}
     \leq 2 c(n,s)\epsilon.
\end{equation}
Using \eqref{eq:vk in bounded Omega k, for thm 1.3} and \eqref{uk bound compl omegak} we  conclude that, 
\begin{align*}
&\frac{1}{\|u_k\|_{L^2(\Omega)}^2}\int_{\Omega}\int_{\Omega}
    \frac{|u_k(x)-u_k(y)|^2}{|x-y|^{n+2s}}dx\,dy
    \smallskip 
    \\
   & = 
    \frac{1}{\|v_k\|_{L^2(\Omega)}^2}
    \int_{\Omega_k}\int_{\Omega_k}
    \frac{|v_k(x)-v_k(y)|^2}{|x-y|^{n+2s}}dx\,dy 
    +\frac{2}{\|u_k\|_{L^2(\Omega)}^2}
    \int_{\Omega_k}\int_{\Omega\setminus \Omega_k}
    \frac{|u_k(x)-u_k(y)|^2}{|x-y|^{n+2s}}dx\,dy
    \smallskip 
    \\
   & 
    \leq
    \epsilon(1+4c(n,s)).
\end{align*}
This completes the proof of the theorem.
\end{proof}
\begin{example}\label{example:union of 2 strips R3}
Let $\omegainfty$ is the union of two perpendicular infinite strips in $\rn$, i.e. $$\omegainfty=\left(\re^{n-1}\times(-1,1)\right)\cup\left((-1,1)\times\re^{n-1}\right).$$
Then $P_{n,s}^1(\omegainfty) = 0$ whenever $s\in(0,\frac{1}{2})$.\smallskip

The idea is to apply  Theorem \ref{theorem:distance condition}  with  $\Omega= \omegainfty$ and $U=(-1,1)^n.$
Let $A:= \re^{n-1}\times(-1,1)$, $B:= (-1,1)\times\re^{n-1}$.  Then clearly, 

$$
    \int_{\lambda_k U\cap \Omega}
    \frac{dx}{\dist(x,(\lambda_kU)^c)^{2s}} \leq  \int_{\lambda_k U\cap A}
    \frac{dx}{\dist(x,(\lambda_kU)^c)^{2s}} +  \int_{\lambda_k U\cap B}
    \frac{dx}{\dist(x,(\lambda_kU)^c)^{2s}}.$$

 \noindent Using the symmetric property of the domain $\omegainfty$, it is sufficient to show that the following:
\begin{align}\label{example_plus_result}
    \lim_{k\to\infty}\frac{1}{\mathcal{L}^n(\lambda_k U\cap \Omega)}
    \int_{\lambda_k U\cap A}
    \frac{dx}{\dist(x,(\lambda_kU)^c)^{2s}}=0.
\end{align}

We verify this result for dimension $3$. Consider $\lambda_k=k \in \mathbb{N}$. Therefore,  $\lebmthree(\Omega\cap \lambda_k U) = Ck^2$ for some constant $C>0$ independent of $k$. Further, we write $\lambda_kU\cap A = A_1\cup A_2 \cup A_3$, where 
\begin{align*}
A_1=&  (A\cap \lambda_kU) \cap (-1,1)^3, \quad A_2=  \big((A\cap \lambda_kU)\setminus (-1,1)^3\big) \cap \{|x_1|>|x_2|\}, \\
A_3=&  \big((A\cap \lambda_kU)\setminus (-1,1)^3\big) \cap \{|x_2|>|x_1|\}.   
\end{align*} 
For, $x\in A_1$ we get that $\dist(x,(\lambda_kU)^c)\geq k-1$, then  
\begin{align}\label{example_plus_estimate1}
\frac{1}{\lebmthree(\lambda_k U\cap \Omega)} \int_{A_1} 
    \frac{dx}{\dist(x,(\lambda_kU)^c)^{2s}} \leq \frac{C}{k^2}\cdot \frac{\lebmthree(A_1)}{(k-1)^{2s}}\leq  \frac{C}{k^{2s+2}}.
\end{align}
 Again, 
\begin{align*}
\dist(x,(\lambda_kU)^c) = \left\{
\begin{array}{rl}
k-|x_1|, \quad &\mbox{if} \quad x\in A_2,\\
k-|x_2|, \quad &\mbox{if} \quad x\in A_3.
\end{array}\right.
\end{align*}
Then, 
\begin{align}\label{example_plus_estimate2}
&\notag \frac{1}{\lebmthree(\lambda_k U\cap \Omega)} \bigg( \int_{A_2} 
    \frac{dx}{\dist(x,(\lambda_kU)^c)^{2s}} + \int_{A_3} 
    \frac{dx}{\dist(x,(\lambda_kU)^c)^{2s}} \bigg) \\
\notag \leq & \frac{C}{k^2} \bigg( \int_{\{x_3<1\}} \int_{\{|x_2|<k\}}dx_3 dx_2\int_{\{|x_1|<k\}} \frac{dx_1}{(k-|x_1|)^{2s}} \notag \\ & \hspace{.5cm}+ \int_{\{|x_3|<1\}} \int_{\{|x_1|<k\}}dx_3 dx_1\int_{\{|x_2|<k\}}\frac{dx_2}{(k-|x_2|)^{2s}} \bigg) 
\leq  \frac{C kk^{1-2s}}{k^{2}}  \leq Ck^{-2s}.
\end{align}
Therefore, combining \eqref{example_plus_estimate1} and \eqref{example_plus_estimate2} we see that \eqref{example_plus_result} holds.

\end{example}

\begin{corollary}
\label{cor:domain condn}
Assume $\Omega$ is  Lipschitz set such that for some $c=c(\Omega)>0$,
$
   \mathcal{L}^n(\Omega\cap B_R)\geq cR^n\quad\text{for all $R>0$}. $
Then $P^1_{n,s}(\Omega)=0$ for $0<s<\frac{1}{2}$.

\end{corollary}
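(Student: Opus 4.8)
\textbf{Proof plan for Corollary \ref{cor:domain condn}.}
The plan is to deduce this from Theorem \ref{theorem:distance condition} by choosing $U=B_1$, so that $\lambda_k U=B_{\lambda_k}$, and verifying the limit hypothesis
$$
   \lim_{k\to\infty}\frac{1}{\mathcal{L}^n(B_{\lambda_k}\cap\Omega)}
   \int_{B_{\lambda_k}\cap\Omega}\frac{dx}{\dist(x,B_{\lambda_k}^c)^{2s}}=0
$$
along any sequence $\lambda_k\to\infty$ (say $\lambda_k=k$). First I would observe that for $x\in B_{\lambda_k}$ one has $\dist(x,B_{\lambda_k}^c)=\lambda_k-|x|$, so the integral to be controlled is $\int_{B_{\lambda_k}\cap\Omega}(\lambda_k-|x|)^{-2s}\,dx$, which I will bound from above by $\int_{B_{\lambda_k}}(\lambda_k-|x|)^{-2s}\,dx$.

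The key computation is then the elementary radial estimate: passing to polar coordinates,
$$
   \int_{B_{\lambda_k}}\frac{dx}{(\lambda_k-|x|)^{2s}}
   =\mathcal{H}^{n-1}(\mathbb S^{n-1})\int_0^{\lambda_k}\frac{r^{n-1}}{(\lambda_k-r)^{2s}}\,dr
   \le \mathcal{H}^{n-1}(\mathbb S^{n-1})\,\lambda_k^{n-1}\int_0^{\lambda_k}\frac{dr}{(\lambda_k-r)^{2s}}
   = C(n)\,\lambda_k^{n-1}\cdot\frac{\lambda_k^{1-2s}}{1-2s},
$$
which uses crucially that $2s<1$ so that $(\lambda_k-r)^{-2s}$ is integrable up to $r=\lambda_k$. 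Hence the numerator is $O(\lambda_k^{n-2s})$. For the denominator, the growth hypothesis gives $\mathcal{L}^n(B_{\lambda_k}\cap\Omega)\ge c\,\lambda_k^n$. Dividing, the quotient is bounded by $C(n,s,c)\,\lambda_k^{-2s}\to 0$ as $k\to\infty$, since $s>0$. This verifies the hypothesis of Theorem \ref{theorem:distance condition} with $U=B_1$ (which is a bounded open Lipschitz set), and therefore $P^1_{n,s}(\Omega)=0$.

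There is essentially no serious obstacle here; the only minor point worth being careful about is that $\Omega_k=\Omega\cap B_{\lambda_k}$ must have finite perimeter for the application of Theorem \ref{theorem:distance condition} (equivalently, of Proposition \ref{prop:elementary properties}(i) via Remark \ref{remark:bounded perimeter}), but this is already built into the statement of Theorem \ref{theorem:distance condition} through the assumption that $\Omega$ is Lipschitz and $U=B_1$ is Lipschitz, so $\partial(\Omega\cap B_{\lambda_k})$ has finite $\mathcal{H}^{n-1}$ measure. Thus the whole proof reduces to the one-line radial integral estimate above combined with the volume lower bound.
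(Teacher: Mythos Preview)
Your proof is correct and follows essentially the same approach as the paper: choose $U=B_1$, use $\dist(x,B_R^c)=R-|x|$, bound the integral by the radial computation $\int_0^R r^{n-1}(R-r)^{-2s}\,dr=O(R^{n-2s})$, and divide by the volume lower bound $cR^n$ to conclude via Theorem~\ref{theorem:distance condition}. The only cosmetic difference is that you make the intermediate bound $r^{n-1}\le R^{n-1}$ explicit, whereas the paper records the resulting estimate directly.
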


\begin{proof}
Take $U=B_1$ and note that $\dist(x,B_R^c)=R-|x|$ for $x\in \Omega\cap B_R$. Hence we get that
$$
    \frac{1}{\mathcal{L}^n(B_R\cap \Omega)}
    \int_{B_R\cap \Omega}\frac{dx}{\dist(x,B_R^c)^{2s}}
    \leq
    \frac{1}{cR^n}\int_0^R\frac{r^{n-1}}{(R-r)^{2s}}\leq \frac{C(n,s,\Omega)}{R^{2s}},
$$
which tends to $0$ as $R$ goes to $\infty.$ Therefore, the result follows by applying Theorem \ref{theorem:distance condition}.
\end{proof}
 
 \smallskip
    
\begin{example}
 Let $\Omega$ be the domain as in Example \ref{exmpl:domains with sufficient condn}(v) \textit{(Concentric Annulus)}. One can easily verify that  $\mathcal{L}^n(\Omega\cap B_R)\geq cR^n$ for any $R>0$. By Corollary \ref{cor:domain condn} we get $P^1_{n,s}(\Omega)=0$ for $0<s< \frac{1}{2}$. 
\end{example}

\subsection{Sufficient  conditions for  domains with finite measure }
\label{Section:sufficient condition for domains with finite measure}

\smallskip
 
  The main result of this section is Theorem \ref{theorem:general s smaller half in tube}, which is actually an application Theorem \ref{theorem:distance condition}, but we formulate it independently as many important class of domains come under it. For instance the  example  \eqref{example_not_rupert} of $\widetilde{\Omega}$ in the introduction: one can see  easily that $P_{n,s}^1(\widetilde{\Omega}) =0 $  for $s\in (0,\frac{1}{2})$ as an immediate application of Theorem \ref{theorem:general s smaller half in tube}.


\begin{definition}[\textbf{Decay condition in one direction}]
\label{def:decay nicely}
We say that $\Omega\subset\R^n$ satisfies \textit{decay condition in one direction} if for both $\Omega_+=\Omega$ and $\Omega_-:=\{(-x_1,x'):\,(x_1,x')\in\Omega\}$ the following holds: there exists some function $h:[0,\infty)\to [0,\infty)$ such that 
$$
    \mathcal{H}^{n-1}(\Omega_{\pm}\cap \{x_1=R\})
    \leq C h(R),\quad \forall\,R>0,\quad\text{ for some }C=C(\Omega)\geq 0,
$$
and there exists $a>0$ and an infinite sequence $\{R_k\}_{k\in\mathbb{N}}$ such that 
\begin{equation}
 \label{eq:nice decay at infty}
   \lim_{k\to\infty}R_k=\infty,\quad \lim_{k\to\infty}h(R_k)=0,\quad h(R_k+\eta)\leq h(R_k)
   \quad\text{for all $k$ and for all }
   \eta\in [0,a).
\end{equation}
\end{definition}

Here are some examples of sets which have finite measure and satisfy the decay condition in one direction.

\begin{example}
\label{examples to nice decay}
(i)
Let $f_1,f_2:\R\to \R$ be two Lipschitz functions, $ \Omega=\{x= (x_1,x_2) \in \R^2:\, f_1(x_1)\leq x_2\leq f_2(x_1)\}$ and assume 
 $\mathcal{L}^2(\Omega)<\infty.$ 
To see that $\Omega$ satisfies the decay condition in one direction, take $h=f_2-f_1$ and $R_k$ such that
$$
  \sup_{x\in [k,\infty)}h(x)=h(R_k).
$$
Using that $h$ is Lipschitz and $\mathcal{L}^n(\Omega)<\infty,$ it can be easily checked that this supremum is attained at some $R_k\in[k,\infty)$ and that $h(R_k)\to 0$ as $k\to\infty.$ Finally $a$ can be taken to be any positive real number.
 \smallskip
 
 \smallskip
 
 (ii) Let $\epsilon>0$ and for $\ell\in\mathbb{N}$, define $A(\ell):=(\ell,\ell+\frac{1}{\ell^{2+\epsilon}})\times (0,\ell)\subset\R^2.$ Set $\Omega=\bigcup_{\ell=1}^{\infty}A(\ell).$
 Here one can take $h$ as $\ell$ times the characteristic function
 $$
   h=\ell \chi_U,\quad\text{ where }U=\bigcup_{l=1}^{\infty}(\ell,\ell+\ell^{-2-\epsilon})
 $$
 and $a$ a positive number such that the distance between consecutive $A(\ell)$ is bigger than $a$ for all $\ell$ big enough.
\end{example}

\begin{theorem}
\label{theorem:general s smaller half in tube}
Let $0<s<\frac{1}{2}$ and $\Omega\subset\R^n$ be measurable Lipschitz set  and of finite measure $\mathcal{L}^n(\Omega)<\infty.$ Assume that $\Omega$ satisfies the decay condition in one direction and that for any $K>0$ the set $\Omega\cap\{|x_1|<K\}$ is bounded. Then 
$
P_{n,s}^1(\Omega)=0.
$
\end{theorem}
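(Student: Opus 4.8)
The plan is to deduce Theorem \ref{theorem:general s smaller half in tube} from Theorem \ref{theorem:distance condition} by choosing an appropriate bounded open Lipschitz set $U$ and an appropriate sequence $\lambda_k\to\infty$, and then estimating the quantity
$$
   \frac{1}{\mathcal{L}^n(\lambda_k U\cap \Omega)}
   \int_{\lambda_k U\cap\Omega}\frac{dx}{\dist(x,(\lambda_k U)^c)^{2s}}.
$$
The natural candidate, given that the decay condition distinguishes the $x_1$-direction, is a slab: take $U=(-1,1)\times(-T,T)^{n-1}$ for $T$ large enough so that $\Omega\cap\{|x_1|<1\}\subset U$ (possible since $\Omega\cap\{|x_1|<K\}$ is bounded for every $K$), and set $\lambda_k=R_k$, where $\{R_k\}$ is the sequence from the decay condition. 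Then $\lambda_k U=(-R_k,R_k)\times(-TR_k,TR_k)^{n-1}$, and for $R_k$ large the part of $\Omega$ inside $\lambda_k U$ is essentially controlled by the ``face'' $\{|x_1|=R_k\}$ of the box: for a point $x\in\Omega$ with $|x_1|$ close to $R_k$ we expect $\dist(x,(\lambda_k U)^c)=R_k-|x_1|$, since $\Omega$ has finite measure and hence its cross-sections $\Omega\cap\{x_1=R\}$ are bounded, staying well inside $(-TR_k,TR_k)^{n-1}$ once $R_k$ is large.

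The core estimate proceeds as follows. Split $\lambda_k U\cap\Omega$ into the ``central'' part $\{|x_1|\le R_k/2\}$ and the ``boundary'' part $\{R_k/2<|x_1|<R_k\}$. On the central part, $\dist(x,(\lambda_k U)^c)\ge R_k/2$, so the integrand is bounded by $(2/R_k)^{2s}$ and the contribution to the integral is at most $(2/R_k)^{2s}\mathcal{L}^n(\Omega)\to0$, hence negligible after dividing by $\mathcal{L}^n(\lambda_k U\cap\Omega)$ provided that quantity does not itself go to zero --- and it does not, since $\Omega$ has a fixed positive measure portion in $\{|x_1|<1\}\subset U\subset\lambda_k U$. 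On the boundary part, use $\dist(x,(\lambda_k U)^c)=R_k-|x_1|$ (valid for $R_k$ large by the finite-measure/bounded-cross-section argument above) and the cross-section bound $\mathcal{H}^{n-1}(\Omega_\pm\cap\{x_1=R\})\le Ch(R)$ together with Fubini:
$$
   \int_{\{R_k/2<|x_1|<R_k\}\cap\Omega}\frac{dx}{(R_k-|x_1|)^{2s}}
   \le C\int_{R_k/2}^{R_k}\frac{h(R)+h(-R)}{(R_k-R)^{2s}}\,dR.
$$
Here one invokes the monotonicity-type hypothesis \eqref{eq:nice decay at infty}: split the $R$-integral into $[R_k/2,R_k-a]$, where $(R_k-R)^{-2s}\le a^{-2s}$ and the $h$-integral is at most $\sup_{R\ge R_k/2}h(R)\cdot\frac{R_k}{2}\cdot a^{-2s}$ — wait, this needs care — and $[R_k-a,R_k]$, where $h(R)\le h(R_k)$ by \eqref{eq:nice decay at infty} and $\int_{R_k-a}^{R_k}(R_k-R)^{-2s}\,dR=\frac{a^{1-2s}}{1-2s}$ is finite because $s<\frac12$. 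So the boundary contribution is bounded by $C\big(h(R_k)+h(-R_k)+o(1)\big)$ up to the $\mathcal{L}^n(\lambda_k U\cap\Omega)\ge c>0$ normalization, which $\to0$ since $h(R_k)\to0$ (and likewise for $\Omega_-$, using that $\Omega_-$ also satisfies the decay condition, which is exactly why the definition requires it for both $\Omega_+$ and $\Omega_-$).

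The main obstacle — and the place requiring the most care — is the claim that $\dist(x,(\lambda_k U)^c)=R_k-|x_1|$ for $x\in\Omega$ in the boundary region, i.e. that the $x'$-coordinates of such points are bounded independently of $k$ (or at least grow slower than $R_k$), so the relevant nearest face of the box $\lambda_k U$ is $\{|x_1|=R_k\}$ rather than one of the lateral faces $\{|x_i|=TR_k\}$. This follows because $\mathcal{L}^n(\Omega)<\infty$ forces $\operatorname{diam}(\Omega\cap\{x_1=R\})$ to be finite for a.e. $R$ and, more quantitatively, $\mathcal{H}^{n-1}(\Omega\cap\{x_1=R\})\le Ch(R)$ is bounded — but turning a measure bound on a cross-section into a diameter bound is not automatic for a general Lipschitz set, so one should either strengthen the hypothesis slightly or argue that the set of ``bad'' slices (where $\Omega$ reaches far out in $x'$) has small measure and can be absorbed into the negligible terms. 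A clean way around it: enlarge $T$ to $T_k\to\infty$ slowly, e.g. choose $T_k$ so that $\Omega\cap\{|x_1|<R_k\}\subset(-R_k,R_k)\times(-T_kR_k,T_kR_k)^{n-1}$ with $T_k$ as small as possible; since $\Omega\cap\{|x_1|<R_k\}$ is bounded for each fixed $k$ this is well-defined, and one checks the estimates survive as long as $T_k$ does not grow too fast relative to the decay of $h$. Once this geometric point is settled, the rest is the routine splitting and integration above, and Theorem \ref{theorem:distance condition} gives $P^1_{n,s}(\Omega)=0$.
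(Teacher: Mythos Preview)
Your overall plan---apply Theorem \ref{theorem:distance condition} with a box $U$ and $\lambda_k$ related to the sequence $R_k$---is the same as the paper's, but there are two genuine gaps.

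\textbf{Gap 1: the monotonicity hypothesis is read backwards.} Condition \eqref{eq:nice decay at infty} says $h(R_k+\eta)\le h(R_k)$ for $\eta\in[0,a)$, i.e.\ $h$ is dominated by $h(R_k)$ on the interval $[R_k,R_k+a)$ \emph{to the right} of $R_k$. You invoke it to claim $h(R)\le h(R_k)$ for $R\in[R_k-a,R_k]$, i.e.\ to the left, which is not what the hypothesis gives. The paper avoids this by taking $\lambda_k=R_k+a$ (not $\lambda_k=R_k$): the thin boundary layer is then $C_k=\{R_k<|x_1|<R_k+a\}$, exactly where the decay condition bites, and one gets $\int_{C_k}\frac{dx}{(\lambda_k-|x_1|)^{2s}}\le C\,h(R_k)\,\frac{a^{1-2s}}{1-2s}\to 0$. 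On the adjacent region $\{\lambda_k/2\le|x_1|\le R_k\}$ one has $\dist\ge a$, and the integral is bounded by $a^{-2s}\mathcal{L}^n(\Omega\cap\{|x_1|\ge\lambda_k/2\})\to 0$ because $\mathcal{L}^n(\Omega)<\infty$---this replaces your ``$\sup h\cdot R_k/2$'' attempt, which you rightly flagged as needing care.

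\textbf{Gap 2: the lateral-face issue is not resolved.} You correctly identify that ``$\dist(x,(\lambda_kU)^c)=R_k-|x_1|$'' requires knowing the $x'$-coordinates of points of $\Omega$ are not too large, and that a measure bound on a cross-section does not yield a diameter bound. Your proposed fix (let $T_k\to\infty$) does not fit Theorem \ref{theorem:distance condition}, which requires a \emph{fixed} $U$, and the same problem already affects the central region: one cannot assert $\dist\ge R_k/2$ on $\{|x_1|\le R_k/2\}$ without controlling $|x'|$. The paper resolves this via \emph{cylindrical Schwarz symmetrization} (Definition \ref{def:cylindrical Schwarz symm} and Lemma \ref{lemma:cylindrical Schwarz symm}): one reduces to $\Omega=\{|x'|<f(x_1)\}$, so a point with $|x'|$ large forces $f(x_1)$ large, and the set of such $x_1$ is controlled by Chebyshev's inequality, $\mathcal{L}^1\{f>\lambda_k/2\}\le 2\|f\|_{L^1}/\lambda_k\le 2\mathcal{L}^n(\Omega)/\lambda_k$. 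With $V=(-1,1)^n$ the paper then splits each region further according to whether the nearest face of $\lambda_kV$ is $\{|x_1|=\lambda_k\}$ or a lateral face; the lateral contribution is shown to be $O(\lambda_k^{-2s})$ via this Chebyshev bound.
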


\begin{example}
The theorem applies to (i)-(ii) of Example \ref{examples to nice decay} and also to
 $\Omega=\{(x_1,x_2)\in\R^2:\,x_1>1,\;0<x_2<1/x_1^{1+\epsilon}\},$ where $\epsilon>0.$ 
\end{example}

\begin{remark}
It is immediate to see that the technique for the proof of the theorem, without modifications, shows that also for the domain, for $\epsilon>0,$
$$
   \Omega=\left\{|x_1|\geq 1,\,{x_2^2+x_3^2}<\frac{1}{|x_1|^{1+\epsilon}}\right\}
   \cup
   \left\{|x_2|\geq 1,\,{x_1^2+x_3^2}<\frac{1}{|x_2|^{1+\epsilon}}\right\}\cup  B_2(0)
   \subset\R^3,
$$
$P_{3,s}^1(\Omega)=0.$ Or any kind of domain where only a finite number of  ``tentacles" at infinity are unbounded in at most only one direction, which could be along a curve going to infinity, and that these tentacles satisfies the decay condition in one direction. 
An example of a domain to which the methods of Theorem \ref{theorem:general s smaller half in tube} would not apply is the set ($\epsilon >0$ so that $\Omega$ has finite measure).
$$
    \Omega=\left\{x\in\R^3:\,x_1^2+x_2^2>1,\,0<x_3<\frac{1}{(x_1^2+x_2^2)^{1+\epsilon}}\right\}.
$$
For another such example see Example \ref{example:union of 2 strips R3}.
\end{remark}

\smallskip

\begin{proof}[\textbf{Proof of Theorem \ref{theorem:general s smaller half in tube}} 
] 
  We first note that the property of decaying in one direction is not effected by cylindrical Schwarz symmetrization. Hence, by Lemma \ref{lemma:cylindrical Schwarz symm} it is sufficient to consider that $ \Omega $ is rotationally  symmetric about $ x_1 $ axis and the domain is parametrized by a function $f$, i.e. $\Omega= \{(x_1,x')\in \rn | \  |x'| < f(x_1)\}$.  We would like to show that there exist a bounded Lipschitz set $V\subset \rn$ and a sequence $\{\lambda_{k}\}$ tending to infinity as $k\rightarrow \infty$ such that 
\begin{align*}
\lim_{k\rightarrow\infty} \frac{1}{\lebm(\Omega\cap\lambda_kV)}\int_{\Omega\cap\lambda_kV}\frac{d x}{\dist(x,(\lambda_k V)^c)^{2s}}=0.
\end{align*}
We prove this result for dimension $2$ only to keep the argument relatively simple.  Choose $ V={(-1,1)}\times (-1,1)$ and ${\lambda_k}={R_k}+a$, where $R_k$ and $a$ are as in Definition \ref{def:decay nicely}.
As $\lebmtwo(\Omega)<\infty$, we get 
\begin{align}\label{small_leb_measure_outsise}
\lebmtwo \Big( \Omega \cap \big\{|x_1|\geq \frac{\lambda_k}{2}\big\}\Big) \rightarrow 0 \quad \mbox{as} \quad k\rightarrow \infty.
\end{align}
We write $\Omega\cap\lambda_k V=A_k \cup B_k \cup C_k,$ where,
\begin{align*}
A_k=& \{(x_1,x_2)\in \Omega :  |x_1|\leq \frac{\lambda_k}{2}, \ |x_2|\leq \lambda_k\},
\quad B_k= \{(x_1,x_2)\in \Omega : \frac{\lambda_k}{2}\leq |x_1|\leq R_k, \ |x_2|\leq \lambda_k\} ,\\
C_k=& \{(x_1,x_2)\in \Omega : R_k< |x_1|< \lambda_k, \ |x_2|\leq \lambda_k\} .
\end{align*}
We will show that 
\begin{align*}
\underbrace{\int_{A_k} \frac{d x}{\dist(x,(\lambda_k V)^c)^{2s}}}_{:=I_1} + \underbrace{\int_{B_k} \frac{d x}{\dist(x,(\lambda_k V)^c)^{2s}}}_{:=I_2} + \underbrace{\int_{C_k} \frac{d x}{\dist(x,(\lambda_k V)^c)^{2s}}}_{:=I_3}\longrightarrow 0 \quad \mbox{as} \quad k\rightarrow \infty.
\end{align*}
\smallskip

\noindent We first estimate $I_3$. By the assumption in Definition \ref{def:decay nicely} we get that $ \dist(x,(\lambda_k V)^c)=\lambda_k - |x_1|$ for any $x\in C_k$ whenever $\lambda_k$ is large enough.
Further, for large $\lambda_k$, by assumption \eqref{eq:nice decay at infty} we get 
\begin{align*}
 \int_{C_k} \frac{d x}{\dist(x,(\lambda_k V)^c)^{2s}}
 \leq  \int_{\{|x_2|< h(R_k)\}} \int_{\{R_k\leq |x_1|<\lambda_k\}} \frac{dx_2 d x_1}{(\lambda_k - |x_1|)^{2s}}
 \leq   K h(R_k) a^{1-2s}\rightarrow 0.
\end{align*}
  Next we estimate $ I_2 $, we split  $ B_k = E_k \cup F_k $ where
$E_k=  \{x\in B_k : \ |x_2|\leq |x_1|\} $ and 
$F_k= \{x\in B_k : \ |x_2|>|x_1|\}$. Therefore, we get that
\begin{align*}
\dist(x,(\lambda_k V)^c)=
\left\{
	\begin{array}{ll}
		\lambda_k - |x_1| \quad \mbox{if} \quad x \in E_k,\\
		\lambda_k - |x_2|  \quad \mbox{if} \quad x \in F_k.\\ 
		\end{array}
\right.
\end{align*}
If $ x \in E_k$ we get $\dist(x,(\lambda_k V)^c)\geq a,$ also note $\lebmtwo(E_k)\rightarrow 0$ as $k\rightarrow 0$ by \eqref{small_leb_measure_outsise}. Then
\begin{align}\label{estimate_E}
 \int_{x \in E_k}\frac{d x}{\dist(x,(\lambda_k V)^c)^{2s}}\leq \frac{\lebmtwo(E_k)}{a^{2s}}\rightarrow 0.
\end{align}
 If $x\in F_k$, we define, $Q_k= \Big\{x_1\in \mathbb{R}: \, \frac{{\lambda_k}}{2}\leq|x_1|\leq  R_k \quad \mbox{and} \quad  f(x_1)>|x_1|\Big\}.$ By definition of $Q_k$ we have 
\begin{align*}
  F_k=& \left\{ x\in \mathbb{R}^2:\, x_1\in Q_k,\quad  |x_1|<|x_2|<\min\{f(x_1),\lambda_k\}  \right\} \\\subset & 
\left\{ x\in \mathbb{R}^2:\, x_1\in Q_k,\quad  \frac{\lambda_k}{2}<|x_2|<\lambda_k\}  \right\}.
\end{align*}
  \noindent Then clearly ,
\begin{align}\label{estimate_F_first}
\int_{F_k} \frac{d x}{\dist(x,(\lambda_k V)^c)^{2s}}& \leq \int_{x_1\in Q_k}\int_{\{\frac{\lambda_k}{2}<|x_2|< \lambda_k\}} \frac{dx_1 \, dx_2}{{(\lambda_k - |x_2|)}^{2s}}  \leq K_2\lebmone(Q_k) \Big(\frac{\lambda_k}{2}\Big)^{1-2s},
\end{align}
where  $K_2$ is a constant independent of $k$. We also notice that
$Q_k \subset \big \{x_1\in \mathbb{R}: \, f(x_1)> \frac{\lambda_k}{2}\big \}$, then by Chebyshev 's inequality we see 
\begin{align}\label{bound_chebyshev_ineq_F}
 \lebmone(Q_k) \leq \lebmone \bigg\{ x_1 : \, f(x_1)>\frac{\lambda_k }{2}\bigg\}\leq \frac{\|f\|_{L^{1}(\mathbb{R})}}{\frac{\lambda_k}{2}} \leq \frac{2\lebmtwo(\Omega)}{\lambda_k}.
\end{align} 
 Hence, using \eqref{estimate_F_first} and \eqref{bound_chebyshev_ineq_F} we get
\begin{align}\label{estimate_F}
\int_{F_k} \frac{d x}{\dist(x,(\lambda_k V)^c)^{2s}}  & \leq \frac{K_2 \,\lebmtwo(\Omega)}{\lambda_k^{2s}}.
\end{align}
Combining \eqref{estimate_E} and \eqref{estimate_F} we get that $I_2 \rightarrow 0$ as $k \rightarrow \infty$.  Next, to estimate $I_1$ we define $ A_k:=M_k \cup N_k $, where
\begin{align*}
M_k=\Big\{x\in \Omega : \, |x_1|\leq \frac{\lambda_k}{2}, \ |x_2|\leq \frac{\lambda_k}{2}\Big\},\
N_k=\Big\{x\in \Omega : \, |x_1|\leq \frac{\lambda_k}{2},  \  \frac{\lambda_k}{2}<|x_2|< \lambda_k\Big\}.
\end{align*}
We get
\begin{align*}
\int_{M_k} \frac{dx}{\dist(x,(\lambda_k V)^c)^{2s}} \leq \frac{\lebmtwo(M_k)}{(\frac{\lambda_k}{2})^{2s}} \leq \frac{\lebmtwo(\Omega)}{(\frac{\lambda_k}{2})^{2s}} \rightarrow 0.
\end{align*}
Finally, the estimate on $N_k$ follows by similar argument  as in $F_k$. 
\end{proof}

\textbf{Acknowledgement:}    The second author was supported by the ERCIM ``Alain Bensoussan" Fellowship program and HRI postdoctoral fellowship grant. Parts of this work  were  carried out when the first and the second  author were visiting IIT, Kanpur. The hospitality  of IIT, Kanpur  is greatly acknowledged. The work of the third author  is supported  by Inspire grant IFA14-MA43. The third author  would  like to thanks the hospitality of Unicersitat Polyt\'ecnica  de Catalunya, where parts of this work is done. 
The authors would also like to thank Prof. E. Valdinoci, Prof. F. Rupert,  Prof. V. Mazya and Prof.  B. Dyda for  sharing their knowledge on the state of art of the problem at the initial stage of this research.

\end{document}